\newcommand{\R}{\mathbb{R}}
\newcommand{\C}{\mathbb{C}}
\newcommand\Z{\mathbb{Z}}
\newcommand{\N}{\mathbb{N}}
\renewcommand{\H}{\mathcal{H}}
\newcommand{\SL}{{\rm SL}}
\newcommand{\GL}{{\rm GL}}
\renewcommand{\S}{\mathbb{S}}
\newcommand{\CP}{\mathbb{CP}}
\newcommand{\Hg}{\mathcal{H}(\underline{k})}
\newcommand{\Hyp}{\mathcal{H}^{\rm hyp}(4)}
\newcommand{\smN}{\mathcal{N}}
\newcommand{\ul}{\underline}
\newcommand{\Id}{\mathrm{Id}}
\newcommand{\Sig}{\Sigma}
\newcommand{\vect}{{\rm Vect}}
\newcommand{\sm}{\setminus}
\newcommand{\sst}{\scriptstyle}
\newcommand{\nc}{\newcommand}
\nc\cC{\mathscr{C}}
\nc\bR{\mathbb{R}}
\nc\bC{\mathbb{C}}
\renewcommand{\bold}[1]{\medskip \noindent {\bf #1 }\nopagebreak}
\newtheorem{Theorem}{Theorem}[section]
\newtheorem{Corollary}[Theorem]{Corollary}
\newtheorem{Lemma}[Theorem]{Lemma}
\newtheorem{Proposition}[Theorem]{Proposition}
\newtheorem{Remark}[Theorem]{Remark}
\newtheorem{Definition}[Theorem]{Definition}
\title{Non-Veech surfaces in $\Hyp$ are generic}
\author{Duc-Manh Nguyen}
\address{\hspace{-0.5cm} IMB Bordeaux-Universit\'e de Bordeaux \newline
351, Cours de la Lib\'eration \newline
33405 Talence Cedex \newline FRANCE}
\email{duc-manh.nguyen@math.u-bordeaux1.fr}
\author{Alex Wright}
\address{\hspace{-0.5cm} University of Chicago\newline
Dept. of Mathematics \newline
5734 S. University Avenue \newline
Chicago, Illinois 60637 \newline USA}
\email{amwright@math.uchicago.edu}
\date{\today}
\begin{document}
\maketitle
\begin{abstract}
We show that every surface in the component $\Hyp$, that is the moduli space of pairs $(M,\omega)$ where $M$ is a genus three hyperelliptic Riemann surface and $\omega$ is an Abelian differential having a single zero on $M$, is either a Veech surface or a generic surface, {\em i.e.} its $\GL^+(2,\R)$-orbit is either a closed or a dense subset of $\Hyp$. The proof develops new techniques applicable in general to the problem of classifying orbit closures, especially in low genus.

Combined with work of Matheus and the second author, a corollary is that there are at most finitely many non-arithmetic Teichm\"uller curves (closed orbits of surfaces not covering the torus) in $\Hyp$.
\end{abstract}

\section{Introduction}

\subsection{General motivation} There is a natural $\GL^+(2,\R)$ action on moduli spaces of translation surfaces, discussed in more detail below. Its study is the central purpose of Teichm\"uller dynamics, and has diverse applications to the dynamics and geometry of interval exchange transformations and rational billiards, and related problems in physics.

Typical questions about translation surfaces include: What are the asymptotics for the number of cylinders of length at most $L$?  How can the surface be decomposed into simpler pieces, respecting the flat geometry? What is the dynamical behavior of the directional flow in most directions? What are the deviations of ergodic averages for the directional flow?

Precisely answering most of these questions for a given surface usually requires first knowing its orbit closure. Indeed, the orbit closure is the arena in which much of the study of a translation surface occurs: The  $\GL^+(2,\R)$ action on this orbit closure provides the renormalization dynamics for the directional flow on the  surface, and how the orbit closure sits inside of the ambient moduli space determines whether and how often certain geometric configurations can be found inside the surface.

\subsection{Background} Let $\ul{k}=(k_1,\dots,k_n)$, with $k_i \in \N$. Recall that $\Hg$ is the moduli space of pairs $(M,\omega)$, where $M$ is a Riemann surface and $\omega$ is a holomorphic one-form (Abelian differential) on $M$ having $n$ zeros of orders $(k_1,\dots,k_n)$. Elements of $\Hg$ are called {\em translation surfaces}. It is well known that $\Hg$ is an algebraic variety and also a complex orbifold of dimension $2g+n-1$ (see \cite{KonZo03, MasTab, Zorich:survey}). \medskip

There is a natural action of $\GL^+(2,\R)$ on each stratum $\Hg$. Every translation surface can be obtained from a collection of polygons in $\R^2$ by gluing pairs of parallel edges with equal length, and the $\GL^+(2,\R)$ action is obtained from the linear action on the polygons in $\R^2$. See the surveys \cite{MasTab,Zorich:survey} for a more detailed introduction. It turns out that the geometric and dynamic features of a specific translation surface are usually encoded in the closure of its $\GL^+(2,\R)$-orbit (see~\cite{Zorich:survey}). \medskip

The space $\Hg$ carries a natural volume form called the {\em Masur-Veech measure}. By the work of Masur and Veech, the action of $\SL(2,\R)$ is ergodic on the locus of unit area surfaces in each connected component of $\Hg$ with respect to this measure. As a consequence, for almost every surface $(M,\omega)$ in $\Hg$, the $\GL^+(2,\R)$-orbit of $(M,\omega)$ is dense in a connected component of $\Hg$. We will call such $(M,\omega)$ {\em generic surfaces}. On the other hand, every stratum contains infinitely many {\em square-tiled surfaces}, and these surfaces have closed  $\GL^+(2,\R)$-orbits.

In genus two, a complete classification of $\GL^+(2,\R)$-orbit closures has been obtained by  McMullen~\cite{McM07, McM05}. There are also some partial results by Calta~\cite{Cal04} and Hubert-Leli\`{e}vre~\cite{HuLe06}.  Using similar ideas, explicit examples of generic surfaces in the hyperelliptic locus $\mathcal{L} \subset \H(2,2)$  and in $\Hyp$  were constructed by Hubert-Lanneau-M\"oller and the first author~\cite{HubLanMo1,HubLanMo2,Ng11}.

\subsection{Recent progress and hopes for the future.}

Recently Eskin-Mirzakhani~\cite{EskMir12}, and Eskin-Mirzakhani-Mohammadi~\cite{EskMirMoh12}, with a contribution by Avila-Eskin-M\"oller~\cite{AvEskMo12}, proved that all $\GL^+(2,\R)$-orbit closures are submanifolds of $\Hg$ which are complex linear subspaces in local period coordinates (see Section~\ref{sec:fund:results}). This confirmed a longstanding conjecture and has lead to further progress. The structure theory of affine invariant manifolds was developed by the second author in \cite{Wri12}, in particular leading to an explicit full measure set of generic translation surfaces in each stratum. Furthermore the geometry of translation surfaces has been directly connected to orbit closures via the Cylinder Deformation Theorem of the second author in \cite{Wri13}; this will be one of our main tools below.

Conjectures of Mirzakhani (see \cite{Wri12})  predict that there are in fact very few orbit closures, and those that do exist have special properties enjoyed by the current list of known examples. This work is the first partial verification of these conjectures in genus  greater than 2.

\subsection{Statement of result.}  Kontsevich-Zorich have classified the connected components of strata \cite{KonZo03}. In particular, there are always at most three connected components. The stratum $\H(4)$ has only two components: $\Hyp$ and $\H^{\rm odd}(4)$. Here $\Hyp$ is the space of pairs $(M,\omega)\in \H(4)$ where $M$ is a hyperelliptic Riemann surface, and $\H^{\rm odd}(4)$ consists of pairs $(M,\omega)$ where $\omega$ defines an odd spin structure on $M$ (see \cite{KonZo03} for a more detailed explanation).

This paper deals with the component $\Hyp$ of the moduli space of translation surfaces of genus three. We show that no proper $\GL^+(2,\R)$-invariant submanifolds of $\Hyp$  exist, other than closed $\GL^+(2,\R)$-orbits.

\begin{Theorem}\label{th:main:Hyp4}
 Let $(M,\omega)$ be a translation surface in $\Hyp$. Then the $\GL^+(2,\R)$-orbit of $(M,\omega)$ is either closed or dense in $\Hyp$.
\end{Theorem}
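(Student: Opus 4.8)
The plan is to reduce the statement to a classification of affine invariant submanifolds and then to eliminate the intermediate possibilities. By the theorem of Eskin--Mirzakhani--Mohammadi, the orbit closure $\M := \overline{\GL^+(2,\R)\cdot(M,\omega)}$ is an affine invariant submanifold of $\Hyp$, i.e. locally a $\C$-linear subspace in period coordinates, so it suffices to show that the only such $\M$ are closed orbits and $\Hyp$ itself. The simplification special to this stratum is that surfaces in $\Hyp$ carry a single zero: relative and absolute cohomology then coincide, so $\M$ has no ``rel'' directions and $\dim_\C \M = 2\,\mathrm{rank}(\M)$ with $\mathrm{rank}(\M)\in\{1,2,3\}$. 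Rank $1$ forces $\dim_\C\M=2$, so that $\M$ is a single $\GL^+(2,\R)$-orbit which, being a closed submanifold, is a closed orbit; rank $3$ forces $\dim_\C\M=6=\dim_\C\Hyp$, and since $\Hyp$ is connected a full-dimensional affine invariant submanifold must be all of $\Hyp$. Thus everything comes down to excluding proper rank-$2$ (complex dimension $4$) examples.

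To attack the rank-$2$ case I would work with the flat geometry of horizontally periodic surfaces, which are dense in any affine invariant submanifold. Fix such a surface $(X,\omega)\in\M$ and decompose it into horizontal cylinders. Because the zero has cone angle $10\pi$ and the surface is hyperelliptic, the involution $\tau$ preserves the horizontal foliation and either fixes each cylinder or interchanges cylinders in pairs; together with the Euler characteristic constraint this leaves only finitely many admissible cylinder diagrams, which I would enumerate explicitly. On each configuration Wright's Cylinder Deformation Theorem supplies tangent vectors to $\M$: the cylinders group into at most $\mathrm{rank}(\M)=2$ many $\M$-parallel classes, and the horizontal shear and real stretch of each class lie in $T\M$. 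Counting these deformations against $\dim_\C\M=4$, and using the pairing forced by $\tau$, sharply restricts how the cylinders can be grouped.

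The decisive step is degeneration. Collapsing an $\M$-parallel class of cylinders pushes $(X,\omega)$ to the boundary of $\M$, landing in a stratum of strictly lower complexity---typically the genus-two stratum $\H(2)$, where orbit closures are completely classified by McMullen, or a stratum of quadratic differentials on a lower-genus surface. The invariant submanifold induced on the boundary, read back through the degeneration, pins down the combinatorics of $\M$; combined with the cylinder-deformation tangent vectors above, this should in each admissible diagram either exhibit a tangent direction escaping every rank-$2$ subspace (forcing $\mathrm{rank}(\M)=3$, hence $\M=\Hyp$) or identify $\M$ with an explicit closed orbit. One then iterates over the finite list of diagrams.

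I expect the main obstacle to be precisely this case analysis: producing, for every admissible cylinder diagram, enough independent deformations to rule out rank $2$, while correctly handling the degenerate configurations in which all cylinders remain $\M$-parallel and the boundary limit is itself a Teichm\"uller curve. Controlling the latter seems to require the field-of-definition and algebraic-hull machinery to exclude real-multiplication / eigenform-type rank-$2$ loci that could a priori survive the cylinder-deformation argument. The hyperelliptic symmetry is what keeps the combinatorics finite and, I suspect, is essential to closing every branch of the analysis.
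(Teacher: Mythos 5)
Your opening reduction is exactly the paper's: Eskin--Mirzakhani--Mohammadi makes the orbit closure $\M$ an affine invariant submanifold, and since $\Hyp$ is a minimal stratum (relative $=$ absolute cohomology) the Avila--Eskin--M\"oller symplecticity result forces $\dim_\C\M\in\{2,4,6\}$, so everything reduces to excluding $\dim_\C\M=4$. Your middle toolkit (horizontally periodic surfaces via Smillie--Weiss, hyperelliptic constraints on cylinder diagrams, Wright's Cylinder Deformation Theorem) is also the paper's. But your ``decisive step'' --- collapsing an $\M$-parallel class of cylinders to land in $\H(2)$ or a quadratic-differential stratum and reading $\M$ back from the boundary --- is where the proposal has a genuine gap, not merely a different route.

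Concretely: (a) the assertion that such a degeneration stays in (a completion of) $\M$, lands in a lower stratum, and induces there an affine invariant submanifold whose structure ``pins down'' $\M$ requires a theory of boundaries of affine invariant submanifolds that you neither construct nor cite, and none of it is routine --- the limit need not exist as an Abelian differential, and even when it does, the induced boundary locus does not obviously determine $\M$; and (b) you yourself concede that eigenform/real-multiplication-type rank-$2$ candidates ``could a priori survive'' and would need field-of-definition machinery, i.e.\ the proposal does not actually close the rank-$2$ case, which \emph{is} the theorem. The paper never leaves the stratum and needs none of this. It first proves (Proposition~\ref{P:3h}) that any $4$-dimensional $\smN$ contains a horizontally periodic surface with \emph{three} horizontal cylinders --- a step your sketch skips: if every periodic surface in $\smN$ had at most two horizontal cylinders, all cylinders would be free by Proposition~\ref{P:allfree}, and shearing the two simple transverse cylinders inside a two-cylinder surface creates a third horizontal cylinder. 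Then the span of the three core curves in $T(\smN)^*$ must be exactly $2$-dimensional, so there is exactly one linear relation $r_1\alpha_1+r_2\alpha_2+r_3\alpha_3=0$; either all $r_i\neq 0$, whence all three cylinders are free and three independent shears give $\dim_\C\smN\geq 6$ by symplecticity (Proposition~\ref{P:3done}), or exactly one cylinder is free and the other two form an $\smN$-parallel pair. Finally, a case analysis over the two hyperelliptic three-cylinder diagrams (Lemma~\ref{lm:3cyl:models}) eliminates every configuration using only Propositions~\ref{P:break} and~\ref{P:morefree}: one shears the free cylinder to create a transverse cylinder and propagates freeness until the supposedly $\smN$-parallel pair is itself shown to be free, a contradiction. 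If you want to salvage your outline, replace the degeneration step by this in-stratum propagation-of-freeness argument; as written, the heart of your proof is missing.
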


As a direct consequence, we have the following
\begin{Corollary}\label{cor:algebraic}
Any $\GL^+(2,\R)$ orbit closure in $\Hyp$ is an algebraic variety.
\end{Corollary}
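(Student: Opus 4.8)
The plan is to read the corollary off Theorem~\ref{th:main:Hyp4} by combining the dichotomy it provides with the algebraicity of strata and of closed orbits. Fix $(M,\omega)\in\Hyp$ and let $Z=\overline{\GL^+(2,\R)\cdot(M,\omega)}$ be its orbit closure. As recalled in the background, the stratum $\H(4)$ is a quasi-projective algebraic variety, and so is each of its connected components; in particular $\Hyp$ is an algebraic variety.

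By Theorem~\ref{th:main:Hyp4} there are only two possibilities for $Z$. If the orbit is dense then $Z=\Hyp$, which is algebraic by the preceding remark, so there is nothing more to prove in this case. The entire content of the corollary therefore concentrates in the remaining case, that of a closed orbit.

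For a closed $\GL^+(2,\R)$-orbit $Z$ I would argue as follows. By the theorem of Eskin-Mirzakhani-Mohammadi cited above, $Z$ is an affine invariant submanifold of $\Hyp$, \ie it is locally cut out by real-linear equations in period coordinates; by the algebraicity theorem for affine invariant submanifolds, such a locus is a quasi-projective variety defined over $\ol{\Q}$, which gives the result. Alternatively, and more elementarily for closed orbits, one observes that a closed orbit is the $\GL^+(2,\R)$-orbit of a Veech surface, whose image in the moduli space of genus three curves is a Teichm\"uller curve; Teichm\"uller curves are classically known to be algebraic, and the orbit is the total space of a tautological bundle over such a curve, hence algebraic as well.

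Since the corollary is a formal consequence of the dichotomy established in Theorem~\ref{th:main:Hyp4}, there is no genuine obstacle: the only inputs beyond the main theorem are the algebraicity of the ambient component and of closed orbits, both available in the literature. The single point demanding a little care is simply to record which algebraicity statement one appeals to in the closed-orbit case.
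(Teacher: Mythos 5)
Your proof is correct and follows essentially the same route as the paper, which offers no separate argument but states the corollary as a direct consequence of Theorem~\ref{th:main:Hyp4}: in the dense case the closure is $\Hyp$ itself, which is algebraic, and in the closed case the orbit is algebraic since it is (up to the $\C^*$-scaling) the tautological bundle over a Teichm\"uller curve. The one caution is that your first alternative for the closed-orbit case (Filip's algebraicity theorem for arbitrary affine invariant submanifolds) is overkill and would make the corollary independent of Theorem~\ref{th:main:Hyp4} altogether---precisely the point of the paper's remark following the corollary---so the classical Teichm\"uller-curve argument is the one to record.
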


\begin{Remark}
A proof of the conjecture that any  $\GL^+(2,\R)$-orbit closure in any stratum is an algebraic variety has been announced in some recent preprints~\cite{Fil1, Fil2} by S.~Filip.
\end{Remark}

Theorem \ref{th:main:Hyp4} is false for the other connected component $\H^{\rm odd}(4)$ of $\H(4)$. Nonetheless, our methods are applicable to the study of orbit closures in $\H^{\rm odd}(4)$ (see~ \cite{ANW13}) and in even broader generality (for instance in the other strata of three translation surfaces).

Recall the celebrated Veech dichotomy, which states that if the orbit of $(M,\omega)$ is closed, then in every direction with a saddle connection, the surface can be decomposed into parallel cylinders whose moduli are rational multiples of each other; and in every direction without a saddle connection, the straight line flow is uniquely ergodic. A corollary of this dichotomy and Theorem \ref{th:main:Hyp4} is the following.

\begin{Corollary}
If $(M, \omega) \in \Hyp$ has a saddle connection such that in this direction the surface is not the union of parallel cylinders whose moduli are rational multiples of each other, then $(M,\omega)$ is generic.
\end{Corollary}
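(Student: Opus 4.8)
The plan is to obtain this statement as a short formal consequence of Theorem~\ref{th:main:Hyp4} together with the Veech dichotomy, with essentially all of the substance already carried out in the proof of the main theorem. Concretely, I would argue by contraposition on the dichotomy supplied by Theorem~\ref{th:main:Hyp4}, using the hypothesis to eliminate one of the two alternatives.

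First I would record that, by Theorem~\ref{th:main:Hyp4}, the $\GL^+(2,\R)$-orbit of any $(M,\omega)\in\Hyp$ is either closed or dense in $\Hyp$. Since being \emph{generic} means precisely that the orbit is dense in the connected component $\Hyp$, it is enough to exclude the possibility that the orbit of our given surface is closed; density, and hence genericity, will then follow automatically from the main theorem.

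Next I would exclude the closed case using the hypothesis. Assume for contradiction that the orbit of $(M,\omega)$ is closed, i.e. that $(M,\omega)$ is a Veech surface. By the Veech dichotomy, every direction of $(M,\omega)$ that contains a saddle connection must decompose the surface into a union of parallel cylinders whose moduli are pairwise rational multiples of one another. The hypothesis, however, furnishes a saddle connection whose direction violates exactly this decomposition property. This contradiction shows that the orbit cannot be closed, and by the previous step it is therefore dense, so $(M,\omega)$ is generic, as claimed.

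The only point that requires any care is to match the hypothesis verbatim against the failure clause of the Veech dichotomy, so that the negation is the precise logical complement of the Veech-surface conclusion; beyond this there is no genuine obstacle, since all of the real content is contained in Theorem~\ref{th:main:Hyp4}.
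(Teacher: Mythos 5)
Your proposal is correct and is precisely the argument the paper intends: the paper derives this corollary directly from the Veech dichotomy combined with Theorem~\ref{th:main:Hyp4}, exactly as you do (the hypothesis negates the cylinder-decomposition conclusion of the dichotomy, ruling out a closed orbit, so the orbit must be dense). No difference in approach or gap to report.
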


We suspect that every non-Veech surface in $\Hyp$ has a saddle connection such that in this direction the surface is not the union of parallel cylinders whose moduli are rational multiples of each other.

\subsection{Finiteness of Teichm\"uller curves.} Combining Theorem \ref{th:main:Hyp4} with work in preparation of the second author and Matheus \cite{MaWri}, we  immediately get

\begin{Theorem}[Matheus-Nguyen-Wright]\label{th:fin}
There are at most finitely many non-arithmetic closed orbits (Teichm\"uller curves) in $\Hyp$.
\end{Theorem}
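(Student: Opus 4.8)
The plan is to argue by contradiction, feeding Theorem~\ref{th:main:Hyp4} into the Hodge-theoretic finiteness criterion of \cite{MaWri} via the equidistribution theorem of Eskin--Mirzakhani--Mohammadi. Suppose there were infinitely many distinct non-arithmetic Teichm\"uller curves $C_1, C_2, \dots$ in $\Hyp$, and let $\mu_n$ be the affine probability measure supported on $C_n$. First I would invoke equidistribution: after passing to a subsequence, the $\mu_n$ converge weak-$*$ to an affine probability measure $\mu$ whose support $N$ is an affine invariant submanifold of $\Hyp$ that, by the accompanying isolation property, contains all but finitely many of the $C_n$.

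Next I would pin down $N$ using Theorem~\ref{th:main:Hyp4}. Any affine invariant submanifold is the orbit closure of its generic point, so by Theorem~\ref{th:main:Hyp4} every affine invariant submanifold of $\Hyp$ is either a single closed $\GL^+(2,\R)$-orbit or all of $\Hyp$. A closed orbit has complex dimension $2$, while $\dim_\C \Hyp = 2g+n-1 = 6$; since $N$ contains infinitely many distinct closed orbits it cannot itself be a single closed orbit. Hence $N = \Hyp$, i.e. the subsequence $(C_n)$ equidistributes in the entire component.

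The contradiction then comes from \cite{MaWri}. A non-arithmetic Teichm\"uller curve has trace field of degree at least two, and along such a curve the Kontsevich--Zorich cocycle carries, beyond the tautological plane spanned by $\mathrm{Re}(\omega)$ and $\mathrm{Im}(\omega)$, at least one further \emph{Hodge-Teichm\"uller plane} coming from a nontrivial Galois conjugate of the trace field. The decisive point, established in \cite{MaWri} through continuity of the Hodge norm and the behavior of the Hodge bundle under degenerating families, is that such a transverse plane persists in the equidistribution limit: if the $C_n$ equidistribute to an affine invariant submanifold, that submanifold admits a measurably varying transverse Hodge-Teichm\"uller plane on a set of full measure. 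Applied to $N = \Hyp$, this would force a generic surface in $\Hyp$ to carry a Hodge-Teichm\"uller plane transverse to the tautological one, contradicting the fact that a generic surface in $\Hyp$ has no such plane, a consequence of the simplicity and non-degeneracy of the Kontsevich--Zorich spectrum over the full stratum. This contradiction proves the finiteness.

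The main obstacle is the Hodge-theoretic input of \cite{MaWri}: one must establish both that every non-arithmetic Teichm\"uller curve carries a transverse Hodge-Teichm\"uller plane and, more delicately, that these planes survive weak-$*$ limits of the associated measures. From the standpoint of the present paper, however, the only genuinely new ingredient is Theorem~\ref{th:main:Hyp4}: it is precisely the absence of intermediate orbit closures in $\Hyp$ that forces the equidistribution limit to be the full component, which is the one setting in which the Hodge-Teichm\"uller obstruction can be certified to fail. The restriction to non-arithmetic curves is essential, since the arithmetic (square-tiled) Teichm\"uller curves are genuinely infinite in number and carry no transverse Hodge-Teichm\"uller plane.
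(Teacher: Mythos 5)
Your proposal is correct and follows essentially the same route as the paper: the paper gives no detailed argument at all, asserting only that Theorem~\ref{th:fin} follows ``immediately'' by combining Theorem~\ref{th:main:Hyp4} with the Hodge--Teichm\"uller-plane finiteness machinery of \cite{MaWri}, and your write-up (EMM equidistribution/isolation, Theorem~\ref{th:main:Hyp4} forcing the limit orbit closure to be all of $\Hyp$, persistence of transverse Hodge--Teichm\"uller planes in the limit versus their absence on some surface of $\Hyp$) is precisely that intended combination. The only imprecision, attributing the nonexistence of a second Hodge--Teichm\"uller plane to simplicity of the Kontsevich--Zorich spectrum rather than to the explicit square-tiled/Galois-theoretic computation carried out in \cite{MaWri}, sits inside the black box that both you and the paper defer to \cite{MaWri}, so it does not affect the structure of the argument.
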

Theorem \ref{th:fin} will be discussed in more detail in \cite{MaWri}, where the appropriate background is already in place. Here we will give only a short discussion of the context.

A closed orbit is non-arithmetic if it does not contain a square-tiled surface. The assumption of non-arithmeticity is weaker than the typical assumption of algebraic primitivity (which will not be defined here). There is a growing literature by Calta and McMullen (independently), McMullen, Bainbridge-M\"oller, M\"oller, and Matheus-Wright (resp.) on the number of algebraically primitive Teichm\"uller curves in $\H(2), \H(1,1),\H(3,1)$,  $\H^{\rm hyp}(g-1,g-1)$ and $\H(2g-2)$ (resp.) \cite{Cal04, McM03, McMuTor, Mo08, BaMo12, MaWri} . All known finiteness results use foundational work of M\"oller \cite{Mo06, Mo06-2}, but the techniques of \cite{MaWri} additionally use Eskin-Mirzakhani-Mohammadi \cite{EskMirMoh12} and  reduce many finiteness problems for Teichm\"uller curves to existence problems for larger orbit closures with unlikely properties.

A weaker statement (finiteness of algebraically primitive Teichm\"uller curves) than Theorem \ref{th:fin}  was conjectured and discussed in the work of Bainbridge and M\"oller \cite{BaMo12}. Bainbridge has informed us that he, M\"oller, and Habegger  have an independent proof of finiteness of algebraically primitive Teichm\"uller curves in $\Hyp$.

Theorem \ref{th:fin} is false in $\H^{\rm odd}(4)$ by work of McMullen \cite{McM06} (see also \cite{Lanneau:Manh:H4}). There are at present two known non-arithmetic Teichm\"uller curves in $\Hyp$, corresponding to the regular $7$-gon and $12$-gon. These examples are due to Veech \cite{Vee89}. The first is algebraically primitive, and the second is not. For a summary of known Teichm\"uller curves, see the notes and references section of \cite{Wri13-2}. Bainbridge and M\"oller  have conjectured that the regular $7$-gon is the only algebraically primitive Teichm\"uller curve in $\Hyp$ \cite[Ex. 14.4]{BaMo12}.

\subsection*{Acknowledgements:}  The first author is grateful to the Universit\'e de Bordeaux, the ANR Project GeoDyM, and the GDR Tresses for their support.  He thanks Erwan Lanneau for helpful comments on this work.

The second author thanks David Aulicino for helpful conversations, and remains always grateful to Alex Eskin and Maryam Mirzakhani for a great many useful conversations on orbit closures.

The authors thank Curt McMullen and the referees for helpful comments on earlier drafts.

\section{Fundamental results}\label{sec:fund:results}
\subsection{Period coordinates}
Let $(M,\omega)$ be an element in a stratum $\Hg$ of translation surfaces of genus $g$.  We denote by $\Sig$ the set of zeros of $\omega$.  The period coordinate mapping is the map $\Phi$ that associates to $(M,\omega)$ the class $[\omega]$ of $\omega$ in the relative cohomology group $H^1(M,\Sig;\C)$. More concretely, pick a set  $\{ c_1,\dots,c_d\}\subset H_1(M,\Sigma;\Z)$ that is a basis of the space $H_1(M,\Sigma;\R)$. We have

$$\Phi(M,\omega) = \left(\int_{c_1}\omega,\dots,\int_{c_d}\omega\right) \in \C^d \simeq H^1(M,\Sig;\C).$$

\noindent Note that $d=2g+n-1$, where $n$ is the cardinality of $\Sig$. If $(M',\omega')$ is an element of $\Hg$ close enough to $(M,\omega)$, then there exists a distinguished isotopy class of homeomorphisms  $f:M \rightarrow M'$ such that $f(\Sig)=\Sig'$, where $\Sig'$ is the set of zeros of $\omega'$. Using $f$ we can identify $H^1(M',\Sig';\C)$ with $H^1(M,\Sig;\C)$. Thus the period coordinate mapping $\Phi$ is well-defined in a neighborhood of $(M,\omega)$ in $\Hg$, with image in $H^1(M,\Sig;\C)$. It is well known that $\Phi$ is a local coordinates for $\Hg$. It follows, in particular that $\dim_\C \Hg=2g+n-1$. For the case of $\H(4)$, we have $g=3,n=1$. Thus $\dim_\C \Hyp=\dim_\C \H(4)=6$.

(Actually we should admit that the isotopy class of $f:M \rightarrow M'$ above may not be well defined if $M$ or $M'$ has automorphisms preserving $\omega$ or $\omega'$. This is a standard and omnipresent issue which has standard solutions. The issue does not occur for $\Hyp$, since surfaces in minimal strata never have automorphisms \cite{MaMoYo}, and so we will not mention it further. An automorphism of a translation surface is a affine self map with trivial derivative.)

%

\subsection{Saddle connections and Cylinders}

Let $(M,\omega)$ be a translation surface. A {\em saddle connection} on $M$ is a geodesic segment, with respect to the flat metric defined by $\omega$, whose endpoints are zeros of $\omega$  (i.e., singularities of the flat metric) and contains no singularities in the interior. Note that the endpoints of a saddle connection need not to be distinct. \medskip

A {\em cylinder} in $M$ is an open subset $C$ of $M$ which is isometric to $\R\times ]0;h[ / \Z,$ where the action of $\Z$ is generated by $(x,y) \mapsto (x+w,y), \; w >0$, and maximal with respect to this property. Here $]0;h[=\{x: 0<x<h\}$ is an open interval in $\R$, and we equip $\R\times ]0,h[$ with the restricted flat metric from $\R^2$. Generally, we will construct cylinders from rectangles (or parallelograms) in $\R^2$ by identifying two opposite sides. We will call $h$ the height, and $w$ the width or circumference of $C$. The {\em modulus} of $C$ is defined to be $h/w$. We will call any simple closed geodesic in $C$ a core curve of $C$. \medskip

Let $f:\R\times]0;h[/\Z \rightarrow M$ be the isometric embedding of $C$. We can extend $f$ by continuity to a map $\bar{f}: \R\times[0;h] \rightarrow M$. We will call the sets $\bar{f}(\R\times\{0\})$ and $\bar{f}(\R\times\{h\})$ the boundary components of $C$. Each boundary component of $C$ is a concatenation of saddle connections, and is freely homotopic to the core curves of $C$. As subsets of $M$, the boundary components of $C$ need not to be disjoint. If each boundary component of $C$ contains only one saddle connection, we call $C$ a {\em simple cylinder}. \medskip

Given any direction $\theta \in \S^1$, we have a well defined unit vector field on $M\sm \Sig$, induced from the constant unit vector field on $\bR^2$ in direction $\theta$. The flow generated by this vector field gives rise to a foliation of $M$, which will be denoted by $\mathcal{F}_\theta$. A direction  is said to be {\em periodic} for $(M,\omega)$, if every leaf of the foliation $\mathcal{F}_\theta$ is either a closed  geodesic for the flat metric, or a saddle connection. If $\theta$ is a periodic direction, then the complement of the union of all saddle connections that are leaves of $\mathcal{F}_\theta$ is a disjoint union of finitely many cylinders. In this case, we also say that $M$ admits a {\em cylinder decomposition} in direction $\theta$. \medskip

\subsection{Affine invariant submanifolds}

\begin{Definition}[Invariant submanifold~\cite{EskMir12, EskMirMoh12}]\label{def:inv:sub:man}
 An  affine invariant submanifold $\smN$ of $\Hg$ is an immersed manifold of $\Hg$, {\em i.e.} $\smN$ is a manifold and there exists a proper immersion\footnote{We are not aware of any example in which this immersion is not an embedding. (It does not seem out of the question to construct such an example using a covering construction, but we have not tried.) The work of \cite{EskMir12, EskMirMoh12} does not rule out the possibility that the immersion is not an embedding, and allows for this possibility in the definition of affine invariant submanifold. Note that this issue is distinct from the possibility that an embedded affine invariant manifold may become immersed when projected to the moduli space of Riemann surfaces.} $f: \smN \rightarrow \Hg$,  such that each point of $\smN$ has a neighborhood whose image is equal to the set of points satisfying a set of real linear equations in  period coordinates. For notational simplicity, generally we will treat affine invariant submanifold as subsets of strata, referring to the image of the immersion.
\end{Definition}


\begin{Theorem}[Eskin-Mirzakhani-Mohammadi]\label{th:ob:cl}
Let $(M,\omega)$ be an element of $\Hg$. Then, the orbit closure $\overline{\GL^+(2,\R)\cdot(M,\omega)}$ is an affine invariant submanifold of $\Hg$.
\end{Theorem}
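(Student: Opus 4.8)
The plan is to follow the strategy of measure rigidity and equidistribution that adapts Ratner's theorems to the non-homogeneous setting of strata, carried out in two stages: first classify the invariant measures, then promote this classification to a statement about orbit closures. Throughout I write $P \subset \SL(2,\R)$ for the upper-triangular subgroup, $U$ for its unipotent radical, and $g_t = \mathrm{diag}(e^t,e^{-t})$ for the geodesic flow. The starting observation is that $U$ behaves like a unipotent flow --- nearby $U$-orbits diverge polynomially, not exponentially --- so one expects a Ratner-type rigidity to hold even though the ambient space $\Hg$ carries no homogeneous structure. The substitute for homogeneity is the hyperbolic dynamics of $g_t$ together with the linear structure supplied by period coordinates.

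First I would classify the ergodic $P$-invariant probability measures on $\Hg$, aiming to show that each is in fact $\SL(2,\R)$-invariant and \emph{affine}: supported on an affine invariant submanifold in the sense of Definition~\ref{def:inv:sub:man}, where it is the natural Lebesgue-class measure. The engine here is an exponential drift argument in the style of Benoist--Quint and Eskin--Margulis. One examines the conditional measures of an ergodic $P$-invariant measure along the $U$-direction and along the stable and unstable foliations of $g_t$; an entropy dichotomy shows these conditionals are either atomic or of full Lebesgue type. To generate genuinely new directions of invariance, one tracks how two nearby generic points are pushed apart under the flow and records the limiting transverse displacement, which must then be a direction of invariance of the measure. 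Making this produce honest extra invariance requires precise control of the Kontsevich--Zorich cocycle through its Oseledets decomposition; the semisimplicity and algebraicity of this cocycle supplied by Avila--Eskin--M\"oller is exactly what guarantees that the drift direction lands in the tangent space described by period coordinates rather than being reabsorbed by a cocycle twist.

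With the measures classified, the second stage passes to orbit closures by establishing equidistribution. The plan is to show that averages of the $\SL(2,\R)$-action along an individual orbit (for instance sphere averages, or averages over large $P$-balls) converge to an affine measure, and that the support of this limit is exactly $\overline{\GL^+(2,\R)\cdot(M,\omega)}$. The crucial supplementary ingredient is an \emph{isolation} property: affine invariant submanifolds cannot accumulate on one another arbitrarily, so an induction on dimension, combined with a Federer-type density estimate controlling the amount of mass that can escape near lower-dimensional affine manifolds, forces the limiting measure onto a single affine invariant submanifold. One then checks that these submanifolds are closed and cut out by real linear equations in period coordinates, yielding the conclusion that the orbit closure is an affine invariant submanifold.

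The main obstacle is unquestionably the measure classification of the first stage, and within it the exponential drift argument. Unlike in the homogeneous setting, there is no algebraic group acting transitively to organize the dynamics, so the extra invariance must be manufactured entirely from the hyperbolicity of $g_t$ and the linear geometry of period coordinates. The delicate point is coupling the drift estimate to the behavior of the Kontsevich--Zorich cocycle: without the semisimplicity input one cannot rule out that the displacement generated by the drift is consumed by the cocycle's monodromy and so fails to yield a new invariant direction. Controlling this interaction --- quantitatively, and uniformly enough to run the limiting procedure --- is the technical heart of the proof.
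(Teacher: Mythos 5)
The paper does not prove this theorem at all: it is imported as a black box, with the proof residing in the cited works of Eskin--Mirzakhani \cite{EskMir12} and Eskin--Mirzakhani--Mohammadi \cite{EskMirMoh12} (with input from \cite{AvEskMo12}), so there is no internal proof to compare your attempt against. Your outline is, at the level of strategy, a faithful summary of how those papers actually proceed --- first the classification of ergodic $P$-invariant measures as affine and $\SL(2,\R)$-invariant via exponential drift and the semisimplicity of the Kontsevich--Zorich cocycle, then the passage to orbit closures via isolation and equidistribution of expanding averages with a Dani--Margulis-type avoidance argument and induction on dimension. Be aware, however, that what you have written is a road map naming the principal ingredients, not a proof: each step you describe (the entropy dichotomy, the drift estimate coupled to the Oseledets decomposition, the isolation property, the linearization controlling escape of mass) occupies a substantial portion of several hundred pages in the cited papers, and none of it is reproduced or reproved in the present paper.
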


Let $\smN$ be  an affine invariant submanifold of $\Hg$. Then by definition, in a local chart defined by the period mapping, each fiber of the tangent bundle of $\smN$ can be identified with a fixed vector subspace $T(\smN)$ of $H^1(M,\Sigma;\C)$ which can be written as

$$T(\smN)=\C\otimes_\R T_\R(\smN),$$

\noindent with $T_\R(\smN) \subset H^1(M,\Sigma;\R)$. Let $p: H^1(M,\Sigma;\R) \rightarrow H^1(M,\R)$ be the natural projection. (Note that in the case of minimal strata, and $\Hyp$ in particular,  $p$ is an isomorphism.)

\begin{Theorem}[Avila-Eskin-M\"oller \cite{AvEskMo12}]\label{th:symp:cond}
 Any affine invariant submanifold $\smN$ is symplectic, in the sense that the intersection form is non-degenerate on $p(T_\R(\smN))$.
\end{Theorem}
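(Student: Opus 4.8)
The plan is to analyze the radical of the intersection form on $V:=p(T_\R(\smN))$ and show it is trivial. Write $\langle\cdot,\cdot\rangle$ for the symplectic intersection form on $H^1(M;\R)$, let $V^\perp\subset H^1(M;\R)$ be its symplectic orthogonal, and set $R:=V\cap V^\perp$, the radical of $\langle\cdot,\cdot\rangle|_V$. Non-degeneracy of the form on $V$ is exactly the statement $R=0$, so the whole theorem reduces to proving this.

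First I would record that $V$, $V^\perp$, and hence $R$ are all flat subbundles of the real cohomology bundle over $\smN$, i.e. invariant under the monodromy of the Gauss-Manin connection. Indeed, $T_\R(\smN)$ is cut out in period coordinates by a fixed system of real linear equations, so it is locally constant for the flat connection on $H^1(M,\Sig;\R)$; since $p$ intertwines this connection with the one on $H^1(M;\R)$, the image $V$ is flat. The intersection form is preserved by monodromy, so $V^\perp$ is flat, and therefore so is $R=V\cap V^\perp$. The construction is $\GL^+(2,\R)$-equivariant, so $R$ is an $\SL(2,\R)$-invariant, flat, isotropic subbundle.

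Next I would observe that $V$ is never purely isotropic. The space $T_\R(\smN)$ always contains the two-dimensional tautological subspace spanned by $\mathrm{Re}[\omega]$ and $\mathrm{Im}[\omega]$ (the directions tangent to the $\GL^+(2,\R)$-orbit), and $\langle p\,\mathrm{Re}[\omega],p\,\mathrm{Im}[\omega]\rangle=\int_M \mathrm{Re}\,\omega\wedge\mathrm{Im}\,\omega$ is the area of $(M,\omega)$, which is positive (in $\Hyp$ one even has $p$ an isomorphism). Hence $\langle\cdot,\cdot\rangle|_V$ has rank at least two and the tautological plane is transverse to $R$. This leaves exactly the task of ruling out a nonzero flat, $\SL(2,\R)$-invariant, isotropic subbundle $R$.

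The hard part is precisely this last step, and it demands Hodge-theoretic and dynamical input rather than linear algebra: a flat isotropic subbundle is \emph{not} excluded by semisimplicity of the monodromy alone, since a symplectic representation can perfectly well have invariant Lagrangians. The approach I would take is to exploit the positivity of the Hodge metric together with the behaviour of the Hodge norm along the Teichm\"uller geodesic flow. Concretely, let $J$ be the Hodge star operator on harmonic $1$-forms (a fibrewise complex structure, $+i$ on $H^{1,0}$ and $-i$ on $H^{0,1}$), and let $\langle\langle u,v\rangle\rangle$ be the Hodge inner product, so that $\langle u,Jv\rangle=\langle\langle u,v\rangle\rangle$ is positive-definite. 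Since $J$ is \emph{not} flat, the extent to which $R$ fails to be $J$-invariant is measured by the second fundamental form of the Hodge bundle; using Forni's second variation formula for the Hodge norm and the resulting curvature estimates, one shows that a flat isotropic $R$ forces the Lyapunov exponents of the Kontsevich-Zorich cocycle along $R$ to vanish and the Hodge metric on $R$ to be parallel. Combined with the positivity $\langle u,Ju\rangle=\|u\|^2>0$, this is incompatible with $R$ being simultaneously nonzero, flat, and isotropic. This curvature and Lyapunov-exponent analysis, which is the substance of the Avila-Eskin-M\"oller argument \cite{AvEskMo12}, is where essentially all the difficulty lies; the reductions above are formal.
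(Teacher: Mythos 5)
First, a point of comparison: the paper does not prove this statement at all. It is imported wholesale from Avila--Eskin--M\"oller \cite{AvEskMo12} and used as a black box (the remark following it even notes that its use could probably be omitted at the expense of extra analysis). So there is no internal proof to measure your attempt against; it must stand or fall as a proof of the Avila--Eskin--M\"oller theorem itself, and as such it has a genuine gap.

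Your formal reductions are correct and match the standard opening moves: $R:=V\cap V^{\perp}$ is flat and $\SL(2,\R)$-invariant, the tautological plane is symplectic since $\langle p\,\mathrm{Re}[\omega],p\,\mathrm{Im}[\omega]\rangle$ is the area of $(M,\omega)$, and the theorem is exactly the assertion $R=0$. The gap is your final step, and it is not merely an omitted computation. The mechanism you propose --- vanishing Lyapunov exponents along $R$ and a cocycle-invariant Hodge metric on $R$, ``combined with positivity'' --- does not produce a contradiction: a flat invariant subbundle that is isotropic for the symplectic form and carries an invariant positive-definite metric is a perfectly consistent structure (consider a trivial cocycle acting on a Lagrangian of a symplectic vector space, with any compatible complex structure on the ambient space supplying the ``positivity''). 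Nothing in the three ingredients you invoke at the end distinguishes the Hodge-theoretic situation. What genuinely contradicts isotropy is $J$-invariance of $R$: if $u\in R$ implies $Ju\in R$, then $0<\langle u,Ju\rangle$ is incompatible with $\langle\cdot,\cdot\rangle|_{R}\equiv 0$. But proving that a flat invariant isotropic (hence, by the Avila--Eskin--M\"oller dichotomy, isometric) subbundle is $J$-invariant --- equivalently, a sub-variation of Hodge structure --- is precisely the analytic core of \cite{AvEskMo12} (and, later, of Filip's semisimplicity theorem \cite{Fil1}), resting on Forni's second-variation formula and control of the second fundamental form; alternatively one must exploit that $R$ sits inside $p(T_\R(\smN))$, so that its directions integrate to genuine deformations within $\smN$, a fact your concluding step never uses. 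Since you explicitly defer exactly this step back to \cite{AvEskMo12}, the proposal amounts to a correct framing of the problem together with a citation of the theorem being proved, rather than a proof of it.
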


Theorem~\ref{th:symp:cond} will play a seemingly central role in this paper; however its use could probably be omitted at the expense of some additional analysis.

\section{General cylinder deformation results}

\subsection{Cylinder deformations}  The horocycle flow is defined as part of the $\GL^+(2,\bR)$--action,
$$u_t=\left(\begin{array}{cc} 1&t\\0&1\end{array}\right)\subset \GL^+(2,\bR).$$
We also call the action of $u_t$ the {\em horizontal shear}. We will also be interested in the {\em vertical stretch},
$$a_t=\left(\begin{array}{cc}1&0\\0& e^t\end{array}\right)\subset \GL^+(2,\bR).$$

Given a collection $\cC$ of parallel cylinders on a translation surface $(M,\omega)$, we define the {\em cylinder shear} $u_t^\cC (M,\omega)$ to be the translation surface obtained by shearing the cylinders in $\cC$. This deformation of $(M,\omega)$ can be understood very concretely as follows. Express $(M,\omega)$ as a collection of polygons, including a rectangle for each cylinder in $\cC$, with parallel edge identifications. Since the cylinders of $\cC$ are parallel, their core curves  all make a fixed counterclockwise angle $\theta\in [0,2\pi)$ to the positive horizontal direction in the plane. Define the counterclockwise rotation by $\theta$
$$r_\theta=\left(\begin{array}{cc} \cos \theta &-\sin \theta \\\sin \theta &\cos \theta \end{array}\right).$$

Now $u_t^\cC (M,\omega)$ can be obtained by letting the matrix $r_\theta u_t r_{-\theta}$ act linearly on the rectangles which give cylinders in $\cC$ but not on the remaining polygons, and then regluing.  Similarly we define $a_t^\cC (M,\omega)$ by applying $r_\theta a_t r_{-\theta}$ only to the cylinders in $\cC$, and we call $a_t^\cC (M,\omega)$ the \emph{cylinder stretch}. Both cylinder shear and stretch depend on the  set $\cC$ of parallel cylinders.

Consider a cylinder $C$ on a translation surface $(M,\omega)$. On all sufficiently small deformations of $(M,\omega)$ (i.e., translation surfaces sufficiently close to $(M,\omega)$ in the stratum) there is a corresponding cylinder, whose area and slope might be slightly different.

Now let $\smN$ be an affine invariant submanifold. We say that two cylinders on $(M,\omega)\in \smN$ are $\smN$-parallel  if they are parallel, and remain parallel on deformations of $M$ in $\smN$ \cite{Wri13}. Two cylinders $C_1$ and $C_2$ are $\smN$-parallel if and only if one of the linear equations defining $\smN$ locally in period coordinates at $(M,\omega)$ gives that the holonomy of the core curve of $C_1$ is a fixed constant multiple of the holonomy of the core curve of $C_2$.

For example, if $\smN$ is a closed orbit, then any pair of parallel cylinders on any $(M,\omega)\in \smN$ are $\smN$-parallel. In contrast, if $\smN$ is a connected component of a stratum, then any pair of parallel cylinders on any $(M,\omega)\in \smN$ are $\smN$-parallel if and only if their core curves are homologous. Thus  parallelism is in general far weaker than $\smN$-parallelism, and it is the stability of the parallelism under deformations in $\smN$ that is most important in the definition of $\smN$-parallel.

As another example, fix $d>1$ and some stratum $\Hg$, and let $\smN$ be a connected component of the space of degree $d$ translation covers of surfaces in $\Hg$. For each surface $(M,\omega)\in \smN$, there is a covering map $p$ to a surface surface $(M', \omega')\in \Hg$. Two cylinders on $(M,\omega)$ are $\smN$-parallel if and only if their core curves map to homologous classes under $p_*$.

From \cite{Wri13} we have the following

\begin{Theorem}[The Cylinder Deformation Theorem, Wright]\label{T:CD}
Let $ (M,\omega) \in \smN$ be a translation surface, and let $\cC$ be a collection of parallel cylinders on $(M,\omega)$ so that no cylinder in $\cC$ is $\smN$-parallel to a cylinder on $(M,\omega)$ not in $\cC$. Then for all $s, t\in \bR$, the surface $a_s^\cC(u_t^\cC (M,\omega))$ remains in $\smN$.
\end{Theorem}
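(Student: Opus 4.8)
The plan is to reduce the statement to a single membership assertion about the tangent space $T(\smN)$ and then integrate. After applying a rotation $r_{-\theta}$ we may assume the cylinders of $\cC$ are horizontal. For $C\in\cC$ let $\gamma_C$ be the core curve, $h_C$ the height, and let $c_C\in H^1(M,\Sig;\R)$ be the class determined by $\langle c_C,\delta\rangle=\hat\imath(\delta,\gamma_C)$ for $\delta\in H_1(M,\Sig;\Z)$, where $\hat\imath$ is the relative intersection number. A direct computation with periods shows that $u_t^\cC$ sends $\operatorname{Re}[\omega]\mapsto\operatorname{Re}[\omega]+t\,c_\cC$ and fixes $\operatorname{Im}[\omega]$, while $a_s^\cC$ sends $\operatorname{Im}[\omega]\mapsto\operatorname{Im}[\omega]+(e^s-1)\,c_\cC$ and fixes $\operatorname{Re}[\omega]$, where $c_\cC:=\sum_{C\in\cC}h_C\,c_C$. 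Thus the whole family $a_s^\cC(u_t^\cC(M,\omega))$ moves $[\omega]$ inside the complex line $[\omega]+\C\cdot c_\cC$, and no cylinder of $\cC$ degenerates for finite $s,t$. Hence it suffices to show $c_\cC\in T_\R(\smN)$: then $\C\cdot c_\cC\subset T(\smN)$, the path satisfies the linear equations cutting out $\smN$ in period coordinates, and, since it never leaves the region where these cylinders persist, it stays in $\smN$ for all $s,t$.

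Next I would reduce to a statement about a single $\smN$-parallel class. Since the geodesic flow $a_t$ preserves $\smN$, we have $\operatorname{Im}[\omega]\in T_\R(\smN)$; and for the horizontal cylinder decomposition one has $\operatorname{Im}[\omega]=\sum_C h_C c_C$, the sum ranging over \emph{all} horizontal cylinders. The hypothesis that no cylinder of $\cC$ is $\smN$-parallel to a cylinder outside $\cC$ says precisely that $\cC$ is a union of entire $\smN$-parallel equivalence classes. It therefore suffices to prove, for each such class $\mathcal A$, that $\sigma_{\mathcal A}:=\sum_{C\in\mathcal A}h_C c_C\in T_\R(\smN)$; summing over the classes comprising $\cC$ then gives $c_\cC\in T_\R(\smN)$.

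The crux is this per-class assertion, and I expect it to be the main obstacle. The idea is to isolate $\mathcal A$ in its own direction by a deformation inside $\smN$ and then exploit the full directional horocycle flow. If $\mathcal B$ is a different $\smN$-parallel class, then by definition the functionals $v\mapsto v(\gamma_C)$ on $T(\smN)$, for $C\in\mathcal A$ versus $C\in\mathcal B$, are not all proportional; hence there is a tangent vector in $T(\smN)$ whose imaginary part pairs differently with the core curves of $\mathcal A$ and of $\mathcal B$. Moving a small amount $\epsilon$ within $\smN$ along a generic such vector tilts the cylinders of $\mathcal A$—which stay mutually parallel, being $\smN$-parallel to one another—to a direction $\theta_\epsilon$ shared by no cylinder outside $\mathcal A$. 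On the tilted surface the horocycle flow in direction $\theta_\epsilon$ is an element of the $\GL^+(2,\R)$ action, hence preserves $\smN$, and its tangent vector is a class $\sigma_{\mathcal A}^{(\epsilon)}\in T_\R(\smN)$ supported on the cylinders of $\mathcal A$, since $\mathcal A$ is now the complete set of cylinders in direction $\theta_\epsilon$. Letting $\epsilon\to0$, the closedness of the orbit closure $\smN$ guarantees that $T_\R(\smN)$ varies continuously while $\sigma_{\mathcal A}^{(\epsilon)}\to\sigma_{\mathcal A}$, so that $\sigma_{\mathcal A}\in T_\R(\smN)$.

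The delicate points all sit in this last step: checking that a generic small tilt genuinely isolates the direction of $\mathcal A$ (using finiteness of the cylinders and that distinct classes tilt at different rates), that the cylinders of $\mathcal A$ persist with continuously varying heights $h_C(\epsilon)$, and that tangency survives the limit $\epsilon\to0$, which is exactly where closedness of $\smN$ and continuity of its tangent distribution are essential. The symplectic nondegeneracy of Theorem~\ref{th:symp:cond} can be used to represent the functionals $v\mapsto v(\gamma_C)$ by honest elements of $T_\R(\smN)$ and thereby streamline the separation of classes, although the heart of the argument is the isolation-and-limit device rather than the symplectic structure itself.
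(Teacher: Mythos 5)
First, a structural point: the paper does not prove Theorem~\ref{T:CD} at all --- it is imported verbatim from \cite{Wri13} --- so there is no internal proof to compare against, and your attempt must be judged on its own merits. Your first two reductions are correct and match the standard setup: $a_s^\cC u_t^\cC$ moves $[\omega]$ inside the line $[\omega]+\C\,c_\cC$ with $c_\cC=\sum_{C\in\cC}h_C c_C$, so (granting the routine open--closed continuation argument you gesture at) everything reduces to showing $\sigma_{\mathcal A}=\sum_{C\in\mathcal A}h_C c_C\in T_\R(\smN)$ for each equivalence class $\mathcal A$ contained in $\cC$.

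The gap is in your crux step, and it is fatal as written. After you tilt $\mathcal A$ into its own direction $\theta_\epsilon$, the derivative of the horocycle flow in direction $\theta_\epsilon$ is (up to rotation) the class $\mathrm{Im}_{\theta_\epsilon}[\omega_\epsilon]$, which is supported on the \emph{whole} surface. It equals $\sum_{C\in\mathcal A}h_C(\epsilon)\,c_C$ only if the tilted surface is \emph{completely periodic} in direction $\theta_\epsilon$, i.e. decomposes entirely into $\theta_\epsilon$-cylinders. But your tilt is generic precisely so that nothing outside $\mathcal A$ is parallel to $\mathcal A$; hence the complement of $\bigcup\mathcal A$ is not a union of $\theta_\epsilon$-cylinders (typically the $\theta_\epsilon$-flow there is minimal), and the horocycle derivative picks up contributions from that region. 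Knowing that $\mathcal A$ is the complete list of $\theta_\epsilon$-cylinders does not localize the horocycle derivative onto those cylinders: the discrepancy between the full shear $u_t$ and the cylinder shear $u_t^{\mathcal A}$ is exactly what the theorem quantifies, so this step in effect assumes what is to be proved. Nor is there a cheap repair: applying Smillie--Weiss (Theorem~\ref{T:SW}) in direction $\theta_\epsilon$ produces a different, possibly faraway, $\theta_\epsilon$-periodic surface and destroys the identification of cylinders --- that is the technique behind Theorem~\ref{T:manyC}, and it yields ``more cylinders'' statements, not the Cylinder Deformation Theorem. Wright's actual proof is genuinely different: for a periodic direction one studies the closure of the horocycle trajectory inside the torus of independent twists in the cylinders; this closure is a subtorus governed by rational relations among the moduli, and one must combine this with perturbations inside $\smN$ that break rational relations between distinct equivalence classes before passing to a limit. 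That arithmetic-of-moduli difficulty, which is the real content of the theorem, is absent from your sketch.
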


 An important special case is when a single cylinder is not $\smN$-parallel to any other. In this case we will say that this cylinder is {\em free}, and the theorem indeed says we are free to stretch and shear this cylinder while remaining in $\smN$.

The relation of cylinders being $\smN$-parallel is an equivalence relation. An equivalence class of $\smN$-parallel cylinders is a maximal collection of pairwise $\smN$-parallel cylinders.

If $C$ is a cylinder and $\cC$ is a collection of cylinders, define $P(C, \cC)$ to be the area of $C\cap (\cup \cC)$ divided by the area of $C$. This is the portion of $C$ which lies in the union of the cylinders in the collection $\cC$.

\begin{Proposition}\label{P:break}
Let $\cC$ and $\cC'$ be equivalence classes of $\smN$-parallel cylinders in different directions. If $C_1, C_2 \in \cC$ then $P(C_1, \cC')=P(C_2, \cC')$.
\end{Proposition}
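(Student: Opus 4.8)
The plan is to exploit the Cylinder Deformation Theorem applied to the family $\cC'$, together with the rigidity of holonomies of $\smN$-parallel cylinders. After acting by a suitable element of $\GL^+(2,\R)$ I may assume that the cylinders of $\cC'$ are horizontal, and I write $\theta \in (0,\pi)$ for the common direction of the cylinders in $\cC$. Since $\cC'$ is an \emph{equivalence class} of $\smN$-parallel cylinders, no cylinder of $\cC'$ is $\smN$-parallel to a cylinder outside $\cC'$, so Theorem~\ref{T:CD} guarantees that the cylinder stretch $s \mapsto a_s^{\cC'}(M,\omega)$ is a path in $\smN$. The key observation is that $C_1, C_2 \in \cC$ are $\smN$-parallel, so the linear equation expressing this persists along the path: there is a constant $\lambda = w_{C_1}/w_{C_2} > 0$ with $\mathrm{hol}(\gamma_{C_1}) = \lambda\,\mathrm{hol}(\gamma_{C_2})$ at every point of the path, where $\gamma_{C_i}$ denotes the core curve of $C_i$ and $\mathrm{hol}$ the $\C$-valued period $\int_{\gamma}\omega$.

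Next I would differentiate this identity at $s=0$. The stretch $a_s^{\cC'}$ scales only the imaginary part of the periods supported on $\cC'$, so in cohomology $\frac{d}{ds}\big|_0 [\omega] = i\,\eta_{\cC'}$, where $\eta_{\cC'} \in H^1(M,\Sig;\R)$ is the class dual to the heights of $\cC'$, characterized by $\langle \eta_{\cC'}, \beta\rangle = \sum_{C'\in\cC'} h_{C'}\,(\gamma_{C'}\cdot\beta)$ for every cycle $\beta$ (the signed vertical extent of $\beta$ inside $\cup\cC'$). Hence $\frac{d}{ds}\big|_0 \mathrm{hol}(\gamma_{C_i}) = i\,\langle \eta_{\cC'}, [\gamma_{C_i}]\rangle$, which is purely vertical, and differentiating the proportionality above yields the scalar identity
\[
\langle \eta_{\cC'}, [\gamma_{C_1}]\rangle = \lambda\,\langle \eta_{\cC'}, [\gamma_{C_2}]\rangle .
\]

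It then remains to translate these pairings into the geometric proportions $P(C_i,\cC')$. Here I would use the elementary area formula: two cylinders $C$ and $C'$ whose core directions differ by an angle $\phi$ meet in $|\gamma_C \cdot \gamma_{C'}|$ parallelograms, each of area $h_C h_{C'}/|\sin\phi|$, so that $\mathrm{Area}(C\cap C') = h_C h_{C'}\,|\gamma_C\cdot\gamma_{C'}|/|\sin\phi|$. Because all intersections of the fixed curve $\gamma_{C_i}$ with the various $\gamma_{C'}$ carry the same sign (the directions are fixed), summing over $C'\in\cC'$ gives $\langle \eta_{\cC'}, [\gamma_{C_i}]\rangle = \pm|\sin\theta|\,\mathrm{Area}(C_i\cap\cup\cC')/h_{C_i} = \pm|\sin\theta|\,P(C_i,\cC')\,w_{C_i}$, using $\mathrm{Area}(C_i) = h_{C_i}w_{C_i}$ and $\mathrm{Area}(C_i\cap\cup\cC') = P(C_i,\cC')\,\mathrm{Area}(C_i)$. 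Substituting into the displayed identity and cancelling the common factor $|\sin\theta|$ turns it into $P(C_1,\cC')\,w_{C_1} = \lambda\,P(C_2,\cC')\,w_{C_2}$; since $\lambda = w_{C_1}/w_{C_2}$, the circumferences cancel and $P(C_1,\cC') = P(C_2,\cC')$, as desired. The one point requiring genuine care is this last bookkeeping step — matching the cohomological intersection pairing $\langle\eta_{\cC'},[\gamma_{C_i}]\rangle$ with the overlap area, and in particular checking the sign-consistency that lets the individual parallelogram areas add rather than cancel; everything else is a formal consequence of Theorem~\ref{T:CD} and the defining linear equations of $\smN$.
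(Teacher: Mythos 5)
Your proof is correct and follows essentially the same route as the paper: both arguments apply the Cylinder Deformation Theorem to the stretch of the equivalence class $\cC'$ and exploit that $\smN$-parallelism forces the ratio of the core-curve holonomies of $C_1$ and $C_2$ to stay fixed along that deformation. The only difference is presentational — you differentiate the identity at $s=0$ and evaluate the pairing $\langle \eta_{\cC'},[\gamma_{C_i}]\rangle$ via intersection numbers and parallelogram areas, whereas the paper integrates, writing the circumference as the affine function $c_i(t)=\bigl(1-P(C_i,\cC')+tP(C_i,\cC')\bigr)c_i(1)$ and observing that the ratio $c_1(t)/c_2(t)$ can be constant only if $P(C_1,\cC')=P(C_2,\cC')$ — the same computation in integrated form.
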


\begin{proof}
For visualization it might help to assume that the cylinders in $\cC$ are horizontal, and those in $\cC'$ are vertical. (This can be arranged with the $\SL(2,\bR)$-action.)

Let $M_t$ be the result of stretching the cylinders of $\cC'$ by a factor of $t$, so that $M_1=M$. This is defined for $0<t<\infty$. Theorem \ref{T:CD} gives that $M_t\in \smN$.

 Let $c_i(t)$ be the circumference of $C_i$ on $M_t$. Compute
 $$c_i(t) = (1-P(C_i, \cC')+t P(C_i, \cC')) c_i(1).$$
 Thus $c_1(t)/c_2(t)$ is a constant times
 $$\frac{1-P(C_1, \cC')+t P(C_1, \cC')}{1-P(C_2, \cC')+t P(C_2, \cC')}.$$
 Since $C_1$ and $C_2$ are $\smN$-parallel, this must be constant in $t$, and hence the result follows.
\end{proof}

There are some special cases of Proposition \ref{P:break} that are especially simple and useful.

\begin{Proposition}\label{P:morefree}
Let $C'$ be free, and let $C$ be a cylinder that is not parallel to $C'$.
\begin{enumerate}[(a)]
\item If the closure of $C$ contains $C'$, then $C$ is free.
\item If $C$ is contained entirely in the closure of $C'$, and there is no other cylinder parallel to $C$ with this property, then $C$ is free.
\item If $C$  intersects, but is not contained in $C'$, and there is no other cylinder parallel to $C$ with this property, then $C$ is free.
\item If $C$  is disjoint from $C'$, and there is no other cylinder parallel to $C$ with this property, then $C$ is free.
\end{enumerate}
\end{Proposition}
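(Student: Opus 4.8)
The plan is to deduce all four parts from Proposition~\ref{P:break} by a single contradiction argument, reading off the relevant intersection proportions. Since $C'$ is free, its equivalence class of $\smN$-parallel cylinders is the singleton $\cC'=\{C'\}$. Suppose toward a contradiction that $C$ is not free, and let $\cC$ denote the equivalence class of $C$, so that $\cC$ contains some cylinder $\tilde{C}\neq C$. Because $C$ is not parallel to $C'$, the classes $\cC$ and $\cC'$ lie in different directions, and so Proposition~\ref{P:break} applies and gives $P(C,\{C'\})=P(\tilde{C},\{C'\})$. The whole proof then reduces to showing that in each case the geometric hypothesis pins down $P(C,\{C'\})$ to a value that $\tilde{C}$ cannot share.

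First I would record the elementary dictionary between the proportion $P(\cdot,\{C'\})$ and the geometric position of a cylinder. Using that distinct parallel cylinders are disjoint (so $\tilde{C}\cap C=\vide$), that $C$ and $\tilde{C}$ are parallel, and that boundaries of cylinders have measure zero, one gets, for any cylinder $D$ not parallel to $C'$, that $P(D,\{C'\})=0$ exactly when $D\cap C'=\vide$, that $P(D,\{C'\})=1$ exactly when $D\subset\overline{C'}$, and that $0<P(D,\{C'\})<1$ exactly when $D$ meets $C'$ without being contained in it. The only point requiring care is that the intersection of two open cylinders has either empty or strictly positive area, so that the proportion genuinely detects these three mutually exclusive configurations.

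For parts (b), (c), (d) the argument is then immediate: the hypotheses say respectively that $P(C,\{C'\})=1$, that $0<P(C,\{C'\})<1$, and that $P(C,\{C'\})=0$. In each case the equality $P(\tilde{C},\{C'\})=P(C,\{C'\})$ forces $\tilde{C}$ into the same one of the three configurations relative to $C'$, i.e.\ makes $\tilde{C}$ another cylinder parallel to $C$ enjoying exactly the stated property; this contradicts the assumption that no such other cylinder exists. Part (a) I would isolate, since it has a slightly different shape: if $\overline{C}$ contains $C'$ then $C\cap C'$ has full measure in $C'$, so that $P(C,\{C'\})=\mathrm{Area}(C')/\mathrm{Area}(C)>0$; on the other hand $\tilde{C}$ is disjoint from $C$, whence $\tilde{C}\cap C'\subset\tilde{C}\cap\partial C$ has measure zero and $P(\tilde{C},\{C'\})=0$, again contradicting $P(C,\{C'\})=P(\tilde{C},\{C'\})$.

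Since the statement follows so directly from Proposition~\ref{P:break}, I do not expect a serious obstacle here; the only things to be careful about are the measure-theoretic bookkeeping in the dictionary step (ensuring $\partial C$ and $\partial C'$ are genuinely negligible, and that distinct parallel cylinders are disjoint), and correctly matching each hypothesis in (b)--(d) with its $P$-value so that the partner $\tilde{C}$ is seen to violate precisely the ``no other cylinder parallel to $C$ with this property'' clause.
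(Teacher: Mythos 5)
Your proof is correct and takes essentially the same approach as the paper: the paper states Proposition~\ref{P:morefree} without a separate proof, presenting it exactly as a collection of special cases of Proposition~\ref{P:break}, and your argument supplies precisely those details. The dictionary between the values of $P(\cdot,\{C'\})$ and the three geometric configurations, together with the disjointness of distinct parallel cylinders used in part (a), is the intended (and valid) way to carry this out.
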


\subsection{Finding many cylinders} Let $\smN$ be an affine invariant submanifold, and let $(M,\omega)$  be a point of $\smN$. Recall that $T_{(M,\omega)}\smN$ is a vector subspace of $H^1(M, \Sigma, \C)$, and that $p(T_{(M,\omega)}\smN)\subset H^1(M,\C)  = H_1(M, \C)^*$. Given any closed curve $\alpha$ in $M$, we consider $\alpha$ as an element of $p(T_{(M,\omega)}\smN)^*$. If $\alpha_1, \ldots, \alpha_n$ are a collection of closed curves of a horizontally periodic translation surface $(M,\omega)$, denote by $\vect(\alpha_1,\ldots,\alpha_n)$  the subspace of $p(T_{(M,\omega)}\smN)^*$ generated by $\alpha_1,\ldots, \alpha_n$. The following is a summary of some relevant results in Section 8 of \cite{Wri13}.

\begin{Theorem}[Wright]\label{T:manyC}
Assume $\smN$ is an affine invariant submanifold. Let $k=\frac12 \dim_\C p(T(\smN))$. Then there is an  $(M,\omega)\in \smN$ that is horizontally periodic with horizontal core curves $\alpha_1, \ldots, \alpha_n$ and $\dim_\C \vect(\alpha_1, \ldots, \alpha_n)=k$.

$\vect(\alpha_1,\ldots,\alpha_n)$ is always an isotropic subspace of the $2k$-dimensional symplectic vector space $p(T_{(M,\omega)}\smN)^*$. Consequently $\dim_\C \vect(\alpha_1, \ldots, \alpha_n)\leq k$.

If the inequality is strict, then it is possible to find another horizontally periodic surface $(M',\omega')$ in $\smN$ with more horizontal cylinders than $(M,\omega)$. Furthermore, if $(M,\omega)$ has a pair of $\smN$-parallel horizontal cylinders, then so does $(M',\omega')$.
\end{Theorem}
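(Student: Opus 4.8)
The plan is to prove the three assertions in their logical order: first the bound $\dim_\C \vect(\alpha_1,\dots,\alpha_n) \le k$ via isotropy, then the mechanism for increasing the number of horizontal cylinders when this bound is not attained, and finally the existence statement, which I would obtain by iterating the second mechanism. At the outset I would record that, by Theorem~\ref{th:symp:cond}, $p(T_{(M,\omega)}\smN)$ is a symplectic vector space of dimension $2k$, so that an isotropic subspace has dimension at most $k$; this is the source of the inequality. To produce an initial horizontally periodic surface in $\smN$ at all, I would invoke standard results on minimal sets of the horocycle flow together with the finite invariant measure on $\smN$ furnished by Theorem~\ref{th:ob:cl}.

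For the isotropy I would proceed as follows. If $C_i$ and $C_j$ are $\smN$-parallel horizontal cylinders, the defining linear equation forces the functionals $\alpha_i,\alpha_j \in p(T_{(M,\omega)}\smN)^*$ to be proportional, so $\vect(\alpha_1,\dots,\alpha_n)$ is spanned by one representative functional $\alpha_\cC$ per equivalence class $\cC$ of $\smN$-parallel horizontal cylinders. For each such class, Theorem~\ref{T:CD} guarantees that the shear $u_t^\cC$ stays in $\smN$; its derivative is a real class $\eta_\cC \in T(\smN)$ that pairs with any cycle $\gamma$ as $\sum_{C_i\in\cC} h_i\,(\gamma\cdot\alpha_i)$, where $h_i$ is the height of $C_i$. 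A short linear-algebra check shows that $p(\eta_\cC)$ is, up to a nonzero scalar, the image of $\alpha_\cC$ under the isomorphism $p(T(\smN))^* \cong p(T(\smN))$ induced by the symplectic form. Consequently the symplectic pairing of the functionals associated to two classes $\cC,\cC'$ is a combination of the homological intersection numbers $\alpha_i\cdot\alpha_j'$; since all the core curves involved are horizontal, they are pairwise disjoint, these intersection numbers vanish, and $\vect(\alpha_1,\dots,\alpha_n)$ is isotropic.

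The crux is the third assertion. When $\dim_\C \vect < k$ the isotropic subspace is not Lagrangian, so by the nondegeneracy of Theorem~\ref{th:symp:cond} there is a deformation direction in $p(T(\smN))$ not spanned by the shears $\eta_\cC$ of the present horizontal cylinders. The goal is to convert this unused direction into a new horizontal saddle connection that subdivides the horizontal decomposition, strictly increasing the cylinder count. My approach would combine the Cylinder Deformation Theorem with the geodesic flow $a_t$ and a recurrence/compactness argument: deform within $\smN$ using the unused direction and renormalize so that, along a suitable sequence of times, the surface returns close to a horizontally periodic configuration of $\smN$ carrying an extra horizontal saddle connection. I would choose the deformation to fix or merely rescale a prescribed pair of $\smN$-parallel horizontal cylinders, so that such a pair survives in the limit, yielding the ``furthermore'' clause. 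This is the main obstacle: one must ensure the limit stays in $\smN$ rather than escaping to a degeneration, that it is again horizontally periodic, and that the number of cylinders genuinely goes up. This is precisely the analytic heart of Section~8 of \cite{Wri13}, and I would expect to spend most of the effort here.

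Finally I would deduce the existence statement by iteration. Beginning with the horizontally periodic surface produced at the outset, I would repeatedly apply the increasing mechanism: while $\dim_\C\vect < k$, replace the current surface by a horizontally periodic surface in $\smN$ with strictly more horizontal cylinders. Since the number of cylinders in any decomposition within a fixed stratum is bounded above, this terminates, and by the isotropy bound it can only stop at a surface with $\dim_\C\vect = k$. Carrying an $\smN$-parallel pair through each step gives the final clause. I expect the routine ingredients---the initial periodic surface, the projection linear algebra, and the boundedness of cylinder numbers---to be straightforward, with essentially all the difficulty concentrated in the limiting argument above.
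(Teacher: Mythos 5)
First, a remark on what ``the paper's own proof'' is here: the paper does not prove Theorem~\ref{T:manyC} at all. It is stated explicitly as a summary of results from Section~8 of \cite{Wri13}, with the note that the proofs are found there and that the key external input is the Smillie--Weiss theorem (Theorem~\ref{T:SW}). So your proposal must be measured against Wright's argument. Its skeleton does match: your isotropy argument is essentially the correct and complete one (the equivalence-class shears lie in $T_\R(\smN)$ by Theorem~\ref{T:CD}, the symplectic duality supplied by Theorem~\ref{th:symp:cond} carries them to nonzero multiples of the functionals $\alpha_\cC$, and disjointness of horizontal core curves kills all pairings), and your derivation of the first paragraph by iterating the third --- starting from a horizontally periodic surface obtained via horocycle-flow minimal sets and using the bound on the number of cylinders from \cite{Mas86} --- is exactly how Wright concludes.

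The genuine gap is in the third paragraph of the theorem, which is its real content, and the problem is not only that you defer it: the deformation you propose is mis-specified. You only ask for a direction in $p(T(\smN))$ \emph{not in the span of the shears} $\eta_\cC$; such directions exist trivially and are useless, since deforming along a generic one tilts the existing horizontal cylinders, and your $a_t$-renormalization scheme then has no reason to return to a horizontally periodic surface of $\smN$, much less one with more cylinders --- indeed you list precisely these points as unresolved obstacles. What the non-Lagrangian hypothesis actually buys, via nondegeneracy, is a real class $\xi\in T_\R(\smN)$ with $p(\xi)(\alpha_i)=0$ for \emph{every} core curve, yet with $\xi(h)\neq 0$ for some horizontal saddle connection $h$: the annihilator of $\vect(\alpha_1,\ldots,\alpha_n)$ in $p(T_\R(\smN))$ has dimension $2k-\dim\vect>\dim\vect$, hence strictly contains the projection of the space of twist deformations, which the duality maps into $\vect(\alpha_1,\ldots,\alpha_n)$. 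Deforming by $it\xi$ then leaves every core-curve period literally unchanged, so all old cylinders persist, horizontal and with the same circumferences, while the saddle connection $h$ tilts; consequently the old cylinders no longer fill the surface, and applying Theorem~\ref{T:SW} (horocycle flow, not the geodesic flow) to the deformed surface yields a horizontally periodic surface in $\smN$ carrying the old cylinders plus at least one new one covering the area deficit. Persistence of cylinders along a path in $\smN$ automatically preserves $\smN$-parallelism, which gives the ``furthermore'' clause with no need to engineer the deformation around a chosen pair. Since your proposal neither identifies the crucial condition $p(\xi)(\alpha_i)=0$ nor establishes any of the persistence and area statements --- it explicitly defers them to \cite{Wri13} --- it cannot be counted as a proof of the theorem's main clause.
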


In fact more can be said about the passage from $(M,\omega)$ to $(M',\omega))$: all the cylinders on $(M,\omega)$ in a certain sense persist on $(M',\omega')$. See \cite[Section 8]{Wri13} for more details.

A key step in the proofs of the results in this section (the proofs are found in \cite{Wri13}) is the following result from \cite{SmiWei04}

\begin{Theorem}[Smillie-Weiss]\label{T:SW}
Every horocycle flow orbit closure contains a horizontally periodic translation surface.
\end{Theorem}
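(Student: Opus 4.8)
The plan is to produce the desired surface inside a minimal set for the horocycle flow that sits in the given orbit closure. Normalize to the locus of unit-area surfaces, which is invariant under $u_t$, and write $X=\overline{\{u_t\cdot(M,\omega):t\in\R\}}$ for the orbit closure. The first step is to locate a compact $u_t$-invariant subset of $X$. Here I would invoke the non-divergence of the horocycle flow (Minsky--Weiss, building on Masur and Veech): the forward $u_t$-orbit of any surface spends a definite proportion of time in a fixed compact part of the stratum, so its $\omega$-limit set is a nonempty compact $u_t$-invariant subset of $X$. An application of Zorn's lemma then yields a compact minimal set $Y\subseteq X$. It therefore suffices to prove that every surface in a minimal set is horizontally periodic.

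The second step isolates the one feature that distinguishes the horizontal direction: $u_t$ fixes every horizontal vector, hence preserves the holonomy, and in particular the length, of every horizontal saddle connection along each orbit. Thus a surface is horizontally periodic if and only if every surface on its $u_t$-orbit is, so this property is $u_t$-invariant. Moreover, since $Y$ is compact, the systole is bounded below on $Y$, and hence there is $\delta>0$ such that every horizontal saddle connection on every surface in $Y$ has length at least $\delta$.

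The heart of the argument, and the main obstacle, is to show that a surface $(N,\eta)\in Y$ cannot carry a minimal component in its horizontal foliation, so that the horizontal foliation decomposes entirely into cylinders and $(N,\eta)$ is horizontally periodic. Here I would use the renormalization identity $g_s\,u_t\,g_{-s}=u_{e^{2s}t}$, where $g_s=\mathrm{diag}(e^s,e^{-s})$ is the geodesic flow, together with Masur's criterion relating recurrence of the geodesic trajectory to unique ergodicity of the vertical foliation. The delicate point is that $Y$ is invariant under $u_t$ but not under $g_s$, so the geodesic flow moves between different horocycle orbit closures rather than acting within $Y$; the argument must instead exploit recurrence of $(N,\eta)$ inside the compact set $Y$ under $u_t$. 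The mechanism is that a minimal component is not shielded by any short horizontal closed geodesic, and an analysis of how the transverse measure of such a component evolves shows that its presence is incompatible with the uniform lower bound $\delta$ together with minimality: either the component forces a degeneration pushing the orbit out of the compact part, contradicting $Y\subseteq X$ compact, or one produces inside $\overline{\{u_t(N,\eta)\}}=Y$ a limit surface on which the component has closed up into cylinders, so $(N,\eta)$ could not have lain in a minimal set consisting of non-periodic surfaces.

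Once complete periodicity of minimal sets is established, the theorem follows immediately: the set $Y\subseteq X$ produced above is a nonempty collection of horizontally periodic surfaces contained in the orbit closure, so the orbit closure contains a horizontally periodic translation surface. I expect the genuinely hard input to be this last step, namely ruling out minimal components, which is exactly the content requiring the interval-exchange/measured-foliation machinery and the non-divergence estimates; by contrast, the reductions carried out in the first two steps are soft.
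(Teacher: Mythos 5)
First, note that the paper does not prove this statement at all: it is imported verbatim from Smillie--Weiss \cite{SmiWei04}, so your attempt has to be measured against that paper's argument rather than anything internal here. Your opening reductions do follow the same general plan as \cite{SmiWei04} (non-divergence to get compact minimal sets, then show surfaces in such sets are horizontally periodic), but already step 1 contains an error: in a non-compact stratum the $\omega$-limit set of a $u_t$-orbit is closed and invariant but need \emph{not} be compact. Minsky--Weiss non-divergence guarantees that a definite fraction of each orbit lies in a compact set $K$ (depending on the length of the shortest horizontal saddle connection of the initial surface), hence that the $\omega$-limit set is nonempty and meets $K$; it does not prevent limit points taken along other time sequences from escaping every compact set. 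Since Zorn's lemma requires a compact invariant subset to start from (chains of closed invariant sets need the finite intersection property), the existence of minimal sets is itself a nontrivial part of \cite{SmiWei04} -- for flows on non-compact spaces minimal sets can fail to exist at all -- and your ``soft'' step quietly assumes it.

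The decisive gap, however, is the one you flag yourself: the entire content of the theorem is to show that a surface lying in a compact $u_t$-invariant set cannot carry a minimal component in its horizontal foliation, and what you offer there is not an argument. Its central mechanism is in fact vacuous: $u_t$ preserves the horizontal foliation together with its transverse measure $|dy|$ \emph{exactly}, so ``how the transverse measure of such a component evolves'' along the orbit is: not at all, and no analysis of that evolution can detect anything. The appeal to Masur's criterion through $g_s u_t g_{-s} = u_{e^{2s}t}$ founders on precisely the obstruction you name (the minimal set is not $g_s$-invariant) and is never repaired. The missing idea -- the heart of the Smillie--Weiss argument -- is a concrete geometric one: if the horizontal foliation has a minimal component, then density of its leaves produces saddle connections whose vertical holonomy is arbitrarily small while the horizontal holonomy is large; applying the shear $u_t$ with $t$ chosen to cancel the horizontal holonomy makes such a saddle connection arbitrarily short, so the systole tends to $0$ along the $u_t$-orbit and the orbit closure cannot be compact. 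In contrapositive, surfaces in compact $u_t$-invariant sets are horizontally periodic, which is exactly the lemma your sketch presupposes. As written, your proposal is a (partly flawed) reduction of the theorem to itself.
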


Note that if  $(M,\omega)\in \smN$  is horizontally periodic with horizontal core curves $\alpha_1, \ldots, \alpha_n$, and two of the horizontal cylinders are $\smN$-parallel, then  $\dim_\C \vect(\alpha_1, \ldots, \alpha_n)\leq n-1$. (This is because the $i$ and $j$ cylinders are $\smN$-parallel if and only if $\dim_\C \vect(\alpha_i, \alpha_j)=1$.) This observation leads to the following result.

\begin{Proposition}\label{P:allfree}
Assume $\smN$ is an affine invariant submanifold. Let $k=\frac12 \dim_\C p(T(\smN))$. Assume that every horizontally periodic translation surface has at most $k$ horizontal cylinders. Then every cylinder on every translation surface in $\smN$ is free.
\end{Proposition}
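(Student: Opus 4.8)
The plan is to prove the contrapositive via Theorem~\ref{T:manyC}, combining the isotropy bound with the dimension hypothesis to force every equivalence class of $\smN$-parallel cylinders to be a singleton. First I would unpack what the hypothesis gives us: we are told that every horizontally periodic translation surface in $\smN$ has at most $k$ horizontal cylinders, where $k=\frac12\dim_\C p(T(\smN))$. Since directions can be rotated into the horizontal by the $\SL(2,\bR)$-action and $\smN$ is $\GL^+(2,\R)$-invariant, it suffices to prove that no horizontally periodic surface in $\smN$ has a pair of $\smN$-parallel horizontal cylinders; freeness in an arbitrary direction then follows by applying a rotation.

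Next I would argue by contradiction. Suppose some translation surface in $\smN$ has a cylinder that is not free, i.e.\ it is $\smN$-parallel to some other cylinder in the same direction. By Theorem~\ref{T:SW} (Smillie-Weiss) the horocycle orbit closure of this surface contains a horizontally periodic surface, and by applying a rotation I may assume the two $\smN$-parallel cylinders are horizontal; one should check (as in the discussion preceding the Proposition) that the property of possessing a pair of $\smN$-parallel horizontal cylinders persists when passing to a horizontally periodic surface in the horocycle orbit closure. So there is a horizontally periodic $(M,\omega)\in\smN$ with horizontal core curves $\alpha_1,\dots,\alpha_n$ among which at least two, say the $i$-th and $j$-th, are $\smN$-parallel. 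As noted in the remark just before the Proposition, this means $\dim_\C\vect(\alpha_i,\alpha_j)=1$, and hence $\dim_\C\vect(\alpha_1,\dots,\alpha_n)\leq n-1$.

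Now I would invoke Theorem~\ref{T:manyC} to produce, starting from this $(M,\omega)$, a new horizontally periodic surface with more horizontal cylinders while preserving the existence of an $\smN$-parallel pair. The key point is that $\vect(\alpha_1,\dots,\alpha_n)$ is isotropic in the $2k$-dimensional symplectic space $p(T_{(M,\omega)}\smN)^*$, so $\dim_\C\vect(\alpha_1,\dots,\alpha_n)\leq k$; but our surface has a pair of $\smN$-parallel horizontal cylinders, so in fact $\dim_\C\vect(\alpha_1,\dots,\alpha_n)\leq n-1 < n$, which forces the inequality $\dim_\C\vect(\alpha_1,\dots,\alpha_n)\leq k$ to be strict whenever $n>k$. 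The last sentence of Theorem~\ref{T:manyC} then lets us pass to a surface $(M',\omega')$ with strictly more horizontal cylinders that still carries a pair of $\smN$-parallel horizontal cylinders. Iterating, each step strictly increases the number of horizontal cylinders while keeping the number bounded by the dimension $n-1 < $ (number of cylinders) relation broken, so eventually we obtain a horizontally periodic surface with more than $k$ horizontal cylinders, contradicting the hypothesis.

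The main obstacle I expect is making the iteration terminate correctly: I must verify that as long as the surface has an $\smN$-parallel horizontal pair, the strict inequality $\dim_\C\vect(\alpha_1,\dots,\alpha_n)<k$ really does persist (rather than the dimension climbing up to $k$ while the cylinder count stalls), so that Theorem~\ref{T:manyC} can be reapplied. The clean formulation is: if there is a pair of $\smN$-parallel horizontal cylinders then $\dim_\C\vect \leq n-1$, so either $n-1\geq k$, giving $n>k$ and contradicting the hypothesis directly, or $n-1<k$, in which case the isotropy dimension is strictly less than $k$ and Theorem~\ref{T:manyC} yields a surface with more cylinders that still has an $\smN$-parallel pair. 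Since the number of cylinders is bounded above (by the hypothesis, it is at most $k$), this process cannot continue indefinitely, so we must land in the first case and reach a contradiction. Hence no surface in $\smN$ has a non-free cylinder, which is the claim.
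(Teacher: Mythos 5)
Your proposal is correct and follows essentially the same route as the paper: Smillie--Weiss (Theorem~\ref{T:SW}) to pass to a horizontally periodic surface preserving the $\smN$-parallel pair, then repeated application of Theorem~\ref{T:manyC} (using its ``furthermore'' clause) together with the observation that an $\smN$-parallel pair forces $\dim_\C\vect(\alpha_1,\dots,\alpha_n)\leq n-1$. The only difference is organizational: the paper iterates until it reaches a surface with $\dim_\C\vect=k$ and then derives the contradiction from the persistent pair, whereas you derive it from the cylinder count eventually exceeding $k$; the two bookkeeping schemes are equivalent.
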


\begin{proof}
Let $(M,\omega)\in \smN$, and let $C$ be a cylinder on $(M,\omega)$, which without loss of generality is horizontal. Apply Theorem \ref{T:SW} to obtain a horizontally periodic translation surface $(M',\omega')$, which has a horizontal cylinder corresponding to each horizontal cylinder on $(M,\omega)$ (with the same heights). Note that if two cylinders in $(M,\omega)$ are $\smN$-parallel, then so are the corresponding cylinders on $(M',\omega')$.

Now apply Theorem \ref{T:manyC} repeatedly to $(M',\omega')$, to reach another horizontally periodic $(M'',\omega'')$ where the core curves of the horizontal cylinders $\alpha_1, \ldots, \alpha_k$ satisfy $\dim_\bC \vect\{\alpha_1, \ldots, \alpha_k\}=k$ (here we use the assumption that any horizontally periodic surface in $\smN$ has at most $k$ horizontal cylinders).

If $C$ is $\smN$-parallel to some other cylinder on $(M,\omega)$, then there exists a pair of horizontal cylinders on $(M'',\omega'')$ which are  $\smN$-parallel. If the core curves of these two cylinders are $\alpha_i$ and $\alpha_j$, then $\vect\{\alpha_i, \alpha_j\}$ is one dimensional, which contradicts the fact that $\vect\{\alpha_1, \ldots, \alpha_k\}$ is $k$ dimensional.
\end{proof}

\section{Cylinder decompositions in $\Hyp$}

A  computer aided classification of topological models of cylinder decompositions in $\H(4)$ was carried out some time ago by S. Leli\`evre and recently published as an appendix to \cite{MaMoYo}. Also Lindsey \cite{Lin} has relevant results on decompositions of hyperelliptic surfaces.

Since this is central to our analysis, we have included here a self-contained, computer-free, classification of the topological models of cylinder decompositions in $\Hyp$. The reader willing to trust the classification is urged to skip directly to the next section.

We will use the following well-known fact about hyperelliptic translation surfaces (see \cite{KonZo03}, \cite{Ng11}). We include its proof here for the reader's convenience.

\begin{Lemma}\label{lm:hyp:cyl:inv}
 Let $(M,\omega)$ be a translation surface in one of the components $\H(2g-2)^{\rm hyp}$ or $\H(g-1,g-1)^{\rm hyp}$. If $C$ is a cylinder on $M$, then $C$ is preserved by the hyperelliptic involution.
\end{Lemma}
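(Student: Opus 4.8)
The plan is to use the defining property of the hyperelliptic involution $\tau$ on $M$, namely that it acts as $-1$ on $H^1(M;\C)$ and hence satisfies $\tau^*\omega=-\omega$. In the translation charts in which $\omega=dz$ this means $\tau$ is locally of the form $z\mapsto -z+\mathrm{const}$, so $\tau$ is an affine automorphism of the flat surface with derivative $-\mathrm{Id}$. In particular $\tau$ is an isometry of the flat metric that preserves every direction (it rotates each oriented line by $\pi$), so it sends the cylinder $C$ to a cylinder $\tau(C)$ that is parallel to $C$ and has the same circumference and height. Since two distinct cylinders in a common direction have disjoint interiors, it suffices to rule out $\tau(C)\neq C$, and I would assume this for contradiction.

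Next I would exploit homology. Writing $\gamma$ for the core curve of $C$, the curve $\tau(\gamma)$ is the core of $\tau(C)$, is disjoint from $\gamma$, and satisfies $\tau_*[\gamma]=-[\gamma]$; after reversing the orientation of $\tau(\gamma)$ the two core curves become disjoint and homologous, so together they separate $M$ into two subsurfaces $A$ and $B$, each having $\gamma\cup\tau(\gamma)$ as boundary. Because $\tau$ interchanges the two boundary curves, it either interchanges $A$ and $B$ or preserves each of them. (The degenerate possibility that $\gamma$ is separating, so that $[\gamma]=0$, should be treated separately, but by the same Gauss--Bonnet idea below.)

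In the first case $\tau$ has no fixed point on $\gamma\cup\tau(\gamma)$ (it swaps these curves) and none on $A\cup B$ (it swaps these pieces), so $\tau$ would be fixed-point free; this contradicts the fact that the hyperelliptic involution has exactly $2g+2\geq 4$ fixed points, the Weierstrass points. In the second case I would locate the zeros of $\omega$: in $\mathcal{H}(2g-2)^{\rm hyp}$ there is a single zero, which lies in exactly one of $A,B$, while in $\mathcal{H}(g-1,g-1)^{\rm hyp}$ the two zeros are interchanged by $\tau$, so since $\tau$ now preserves each side both zeros must lie on the same side. Either way one of the two subsurfaces, say $B$, contains no zero of $\omega$ and hence no cone point. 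Its boundary consists of the closed geodesics $\gamma$ and $\tau(\gamma)$, so Gauss--Bonnet for a flat metric with geodesic boundary and no interior cone points gives $\chi(B)=0$, forcing $B$ to be a flat annulus (genus $0$, two boundary components).

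The hard part is turning this into the desired contradiction, and it is exactly the maximality of cylinders that does it: if $B$ is a singularity-free flat annulus bounded by $\gamma$ and $\tau(\gamma)$, then starting from the core $\gamma$ one can extend $C$ across $B$ without meeting any singularity, all the way to $\tau(C)$ on the far side. Thus $C$ is properly contained in a larger flat cylinder, contradicting the maximality required in the definition of a cylinder. This contradiction shows $\tau(C)=C$. I expect the main obstacle to be this last (``$\tau$ preserves each side'') case: the fixed-point count disposes of the swapping case cleanly, but the preserving case is consistent with a naive Riemann--Hurwitz count and is excluded only by combining the stratum-specific placement of the zeros with the Gauss--Bonnet and maximality argument; some additional care is also needed for the separating core-curve subtlety flagged above.
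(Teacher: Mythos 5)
Your proof is correct, and it takes a genuinely different route from the paper's. The paper passes to the quotient: it projects the core curve $c$ of $C$ to the sphere $\CP^1 = M/\tau$, where $\omega^2$ descends to a quadratic differential $\eta$ with a \emph{unique} zero (this is where the hyperelliptic-component hypothesis enters), notes that $\pi(c)$ cuts the sphere into two disks, and applies Gauss--Bonnet to the disk $D_0$ not containing that zero: the singularities inside $D_0$ must be poles of $\eta$, i.e.\ branch points, so $\pi^{-1}(D_0)$ is a connected, singularity-free flat subsurface bounded by $c$ and $\tau(c)$, hence a cylinder contained in $C$ by maximality. You instead stay upstairs: you use $\tau_*=-\mathrm{Id}$ on homology to show that $\gamma\cup\tau(\gamma)$ separates $M$ into two pieces $A$ and $B$, kill the case where $\tau$ swaps $A$ and $B$ by the fixed-point count of the hyperelliptic involution, and in the remaining case use the stratum-specific placement of the zeros (a single zero, or a pair exchanged by $\tau$) to produce a zero-free side, which Gauss--Bonnet on $M$ forces to be a flat annulus; maximality then gives the contradiction. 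The two arguments share the same endgame --- a singularity-free region between $\gamma$ and $\tau(\gamma)$ must be a flat cylinder, contradicting maximality --- and both invoke the defining property of the hyperelliptic components at the same spot. What the paper's version buys is brevity and the absence of case analysis: on the sphere the Jordan curve theorem gives the separation for free, and Gauss--Bonnet downstairs simultaneously rules out your ``swap'' configuration (it forces exactly two branch points inside $D_0$, so the preimage is connected). What your version buys is that it never mentions quadratic differentials, poles, or branched covers; the price is the two-case analysis and the homological separation lemma.

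One small repair: the ``degenerate possibility'' you flag, that $\gamma$ is separating, never occurs and needs no separate treatment. A core curve of a cylinder satisfies $\int_\gamma \omega \neq 0$ (its period is the circumference vector of the cylinder), whereas a separating curve is null-homologous and would have zero period since $\omega$ is closed. So you may simply delete that caveat rather than promise an argument for it.
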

\begin{proof}
Let $\pi: M \rightarrow \CP^1$ be the double covering which is ramified at $2g+2$ points of $M$. Let $\eta$ be the quadratic differential on $\CP^1$ such that $\pi^*\eta = \omega^2$. Since $(M,\omega) \in \H(2g-2)^{\rm hyp}\sqcup \H(g-1,g-1)^{\rm hyp}$, $\eta$  has a unique zero. This is because in  hyperelliptic strata, the hyperelliptic involution either fixes the unique zero of order $2g-2$, or exchanges the two zeros of order $g - 1$ ($\H(2g-2)^{\rm hyp}$ and $\H(g-1,g-1)^{\rm hyp}$ can be respectively identified with $\mathcal{Q}(-1^{2g+1},2g-3)$  and $\mathcal{Q}(-1^{2g+2},2g-2)$, see \cite{KonZo03}, Definition 2). Let $N$ denote the flat surface with conical singularities defined by $\eta$.

Let $\rho$ denote the hyperelliptic involution of $M$, and $c$ be a  simple closed geodesic contained in $C$. Note that since $\rho^*\omega=-\omega$, $\rho$ is an isometry of the flat metric on $M$ whose differential is $-\Id$. In particular $\rho(c)$ is a simple closed geodesic parallel to $c$ (but with opposite orientation). Thus either $c=\rho(c)$ as subsets of $M$, or $c$ and $\rho(c)$ are disjoint. Clearly, we only need to consider the latter case, which means that the projection of $c$ on $\CP^1$ is a simple closed curve.

Consider $\hat{c}=\pi(c) \subset N$. By definition, $\hat{c}$ is a simple closed geodesic in $N$. Since $N$ is homeomorphic to the sphere, $\hat{c}$ cuts $N$ into two disks. Let $D_0$ denote the one that does not contain the unique zero of $\eta$. By the Gauss-Bonnet Theorem, $D_0$ must contain some singularities of the flat metric. By assumption, these singularities are poles of $\eta$ which are images of the branched points of $\pi$. It follows that $C_0=\pi^{-1}(D_0)$ is a connected subsurface of $M$.  Now, $C_0$ is bounded by $c$ and $\rho(c)$ and contains no singularities of the flat metric on $M$. Thus $C_0$ must be a cylinder. By the definition of cylinder on $M$, we can then conclude that $C_0$ is included in $C$ and the lemma follows.
\end{proof}

In what follows, we will say that two parallel cylinders in some translation surface are {\em adjacent} if their boundaries share a common saddle connection. As  direct consequences of  Lemma~\ref{lm:hyp:cyl:inv}, we have

\begin{Corollary}\label{cor:cyl:in:hyp}
Let $(M,\omega)$ be a surface in some hyperelliptic component, which is horizontally periodic. Let $C$ be a horizontal cylinder on $M$.
 \begin{itemize}
  \item[(a)] Let $s$ be a saddle connection invariant by the hyperelliptic involution. If $s$ is contained in the top boundary of $C$, then it is also contained in the bottom boundary of $C$. As a consequence, $s$ cannot be contained in the boundary of another horizontal cylinder.
  \item[(b)] The top and bottom boundary components of $C$ contain the same number of saddle connections. Moreover, the hyperelliptic involution takes the saddle connections on the top boundary of $C$ to those on the bottom boundary, reversing their cyclic order.
  \item[(c)] If $C$ is a simple cylinder, then $C$ is adjacent to a unique horizontal cylinder $C'$.
 \end{itemize}
\end{Corollary}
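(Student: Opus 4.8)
The plan is to distill from Lemma~\ref{lm:hyp:cyl:inv} a single statement about the action of the hyperelliptic involution $\rho$ on a horizontal cylinder, and then read off (a)--(c) from it. Recall that $\rho$ is an isometry of the flat metric with derivative $-\Id$, and that Lemma~\ref{lm:hyp:cyl:inv} gives $\rho(C)=C$. Thus $\rho$ restricts to an isometric involution of $C$; because its derivative reverses the vertical direction, this involution must exchange the top and bottom boundary components of $C$, and because it also reverses the horizontal direction it reverses the cyclic order of the saddle connections along each boundary. I regard this as the engine of the whole corollary. The one genuinely delicate point is to confirm that $\rho$ \emph{interchanges} the two boundary components rather than preserving each of them; this is forced by the sign of the derivative, since a point near the top of $C$ is sent to a point near the bottom.

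Granting this, part (a) is immediate: if $s$ is a $\rho$-invariant saddle connection in the top boundary of $C$, then $s=\rho(s)$ lies in $\rho(\text{top})=\text{bottom}$, so $s$ appears in both boundary components of $C$. Consequently $C$ occupies both local sides of $s$, and no other horizontal cylinder can be adjacent to $s$. For part (b), $\rho$ is a homeomorphism carrying the top boundary bijectively onto the bottom boundary, which yields the equality of the numbers of saddle connections, while the reversal of their cyclic order is precisely the orientation reversal recorded above.

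The substantive assertion is part (c). Here $C$ simple means its top boundary is a single saddle connection $s_t$ and its bottom boundary a single saddle connection $s_b$. First I would observe that $s_t\neq s_b$: otherwise the reasoning of part (a) shows $C$ lies on both sides of this saddle connection, so $\bar C$ is a closed surface built by gluing one cylinder to itself, hence a flat torus, contradicting $g\geq 2$. Let $C'$ be the horizontal cylinder on the far side of $s_t$ and $C''$ the one on the far side of $s_b$; these are the only horizontal cylinders adjacent to $C$, so it suffices to prove $C'=C''$. Since $\rho$ exchanges the boundary components of $C$ it sends $s_t$ to $s_b$ and the outer side of $s_t$ to the outer side of $s_b$, so $\rho(C')=C''$; but Lemma~\ref{lm:hyp:cyl:inv} applied to $C'$ gives $\rho(C')=C'$, whence $C'=C''$. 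I expect the only real obstacle to be this side-and-orientation bookkeeping: one must track that the cylinder exterior to $s_t$ is carried by $\rho$ to the cylinder exterior to $s_b$, and then marry that conclusion to the $\rho$-invariance of $C'$ to force the two neighbors to coincide.
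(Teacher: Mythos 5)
Your proposal is correct and follows essentially the same route as the paper: both rest on Lemma~\ref{lm:hyp:cyl:inv} together with the fact that $\rho$ has derivative $-\Id$, hence swaps the top and bottom boundaries of the invariant cylinder $C$, giving (a) and (b) at once, and both prove (c) by noting that $\rho$ carries the cylinder adjacent along the top saddle connection to the cylinder adjacent along the bottom one while Lemma~\ref{lm:hyp:cyl:inv} forces that neighbor to be $\rho$-invariant, so the two neighbors coincide. Your extra checks (that $s_t\neq s_b$, and the side-tracking near $s_t$) are fine and only make explicit what the paper leaves implicit.
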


\begin{proof}
 (a) and (b) follow from the fact that the hyperelliptic involution exchanges the top and bottom boundary components of $C$.

Recall that $C$ is simple if each of its boundary components contains only one saddle connection. If the saddle connection in the top boundary of $C$ belongs to the bottom boundary of some cylinder $C'$, then the one in the bottom boundary of $C$ belongs to the top boundary of $C'$, from which we deduce (c).
\end{proof}

Note that any $(M,\omega) \in \Hyp$ has at most $5$ saddle connections in any fixed direction, and the maximal number is realized if and only if $M$ is periodic in that direction.

\begin{Lemma}\label{lm:3cyl:models}
Let $(M,\omega)$ be a surface in $\H(4)^{\rm hyp}$ for which the horizontal direction is periodic. We denote the horizontal saddle connections of $M$ by $\{1,\dots,5\}$. Then
\begin{itemize}
 \item[(i)] $M$ has at most three horizontal cylinders.
 \item[(ii)] If $M$ has three horizontal cylinders, then the cylinders are glued together following one of the two models in Figure~\ref{fig:3cyl:decomp:models}.

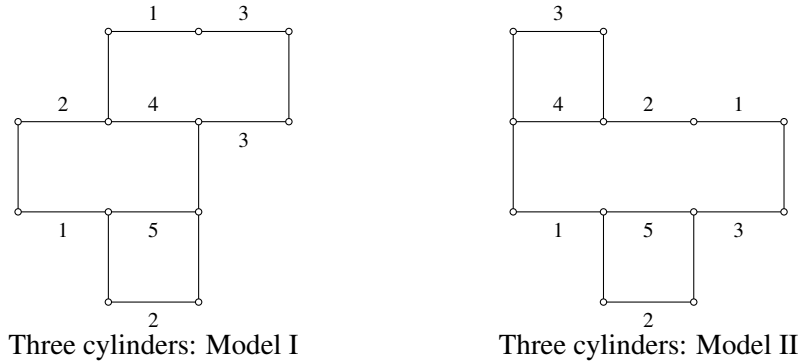
\begin{figure}[htb]
\begin{minipage}[t]{0.4\linewidth}
\centering
\begin{tikzpicture}[scale=0.6]
\draw (0,4) -- (0,2) -- (2,2) -- (2,0) -- (4,0) -- (4,4) -- (6,4) -- (6,6) -- (2,6) -- (2,4) -- cycle;
\draw (2,4) -- (4,4) (2,2) -- (4,2);

\foreach \x in {(0,4), (0,2), (2,6), (2,4), (2,2), (2,0), (4,6), (4,4), (4,2), (4,0), (6,6), (6,4)} \filldraw[fill=white] \x circle (2pt);

\draw (3,6) node[above] {$\sst 1$} (1,2) node[below] {$\sst 1$}  (1,4) node[above] {$\sst 2$}  (3,0) node[below] {$\sst 2$}(5,6) node[above] {$\sst 3$} (5,4) node[below] {$\sst 3$} (3,4) node[above] {$\sst 4$} (3,2) node[below] {$\sst 5$};
\draw (3,-1) node {{\rm\small  Three cylinders: Model I}};

\end{tikzpicture}
\end{minipage}
\begin{minipage}[t]{0.4\linewidth}
\centering
\begin{tikzpicture}[scale=0.6]
\draw (0,6) --  (0,2) -- (2,2) -- (2,0) -- (4,0) -- ( 4,2) -- (6,2) -- (6,4) -- (2,4) --(2,6) -- cycle;
\draw (0,4) -- (2,4) (2,2) -- (4,2);

\foreach \x in {(0,6), (0,4), (0,2) ,(2,6), (2,4), (2,2), (2,0), (4,4), (4,2), (4,0), (6,4), (6,2)} \filldraw[fill=white] \x circle (2pt);

\draw (1,6) node[above] {$\sst 3$} (5,2) node[below] {$\sst 3$} (1,4) node[above] {$\sst 4$} (3,4) node[above] {$\sst 2$} (3,0) node[below] {$\sst 2$} (5,4) node[above] {$\sst 1$} (1,2) node[below] {$\sst 1$} (3,2) node[below] {$\sst 5$};

\draw (3,-1) node{{\rm\small Three cylinders: Model II}};

\end{tikzpicture}
\end{minipage}
\caption{Decomposition into three cylinders of surfaces in $\Hyp$}
\label{fig:3cyl:decomp:models}
\end{figure}

\item[(iii)] If $M$ has two horizontal cylinders, then the cylinders are glued together following one of the two models in Figure~\ref{fig:2cyl:decomp:models}.

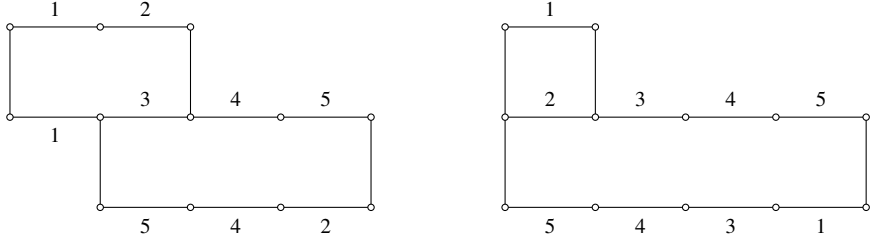
\begin{figure}[htb]
\begin{minipage}[t]{0.4\linewidth}
\centering
\begin{tikzpicture}[scale=0.6]

 \draw (0,2) -- (2,2) -- (2,0) -- (8,0) -- (8,2) -- (4,2) -- (4,4) -- (4,4) -- (0,4) -- cycle;
 \draw ((2,2) -- (4,2);

\foreach \x in {(0,4), (0,2), (2,4), (2,2), (2,0), (4,4), (4,2), (4,0), (6,2), (6,0), (8,2), (8,0)} \filldraw[fill=white] \x circle (2pt);
\draw (1,4) node[above] {$\sst 1$} (1,2) node[below] {$\sst 1$} (3,4) node[above] {$\sst 2$} (3,0) node[below] {$\sst 5$} ((3,2) node[above] {$\sst 3$} (5,2) node[above] {$\sst  4$} (7,0) node[below] {$\sst 2$} (7,2) node[above] {$\sst 5$} (5,0) node[below] {$\sst 4$};
\end{tikzpicture}
\end{minipage}
\begin{minipage}[t]{0.4\linewidth}
 \centering
\begin{tikzpicture}[scale=0.6]
\draw (0,4) -- (0,0) -- (8,0) -- (8,2) -- (2,2)  -- (2,4) -- cycle;
\draw ((0,2) -- (2,2);

\foreach \x in {(0,4),(0,2), (0,0), (2,4), (2,2), (2,0), (4,2), (4,0), (6,2), (6,0), (8,2), (8,0)} \filldraw[fill=white] \x circle (2pt);

\draw (1,4) node[above] {$\sst 1$} (1,0) node[below] {$\sst 5$} (1,2) node[above] {$\sst 2$} (3,2) node[above] {$\sst 3$} (7,0) node[below] {$\sst 1$} (5,2) node[above] {$\sst 4$} (5,0) node[below] {$\sst 3$} (7,2) node[above] {$\sst 5$} (3,0) node[below] {$\sst 4$};
\end{tikzpicture}
\end{minipage}

\caption{Decomposition into two cylinders in $\Hyp$}
\label{fig:2cyl:decomp:models}
\end{figure}

\item[(iv)] If $M$ has only one horizontal cylinder, then the top and bottom boundaries of this cylinder are glued following a unique model given in Figure~\ref{fig:1cyl:decomp:model}.

\begin{figure}[htb]
\centering
\begin{tikzpicture}[scale=0.6]
\draw (0,0) -- (10,0) -- (10,2) -- (0,2) -- cycle;

\foreach \x in {(0,2), (0,0), (2,2), (2,0), (4,2), (4,0), (6,2), (6,0), (8,2), (8,0), (10,2), (10,0)} \filldraw[fill=white] \x circle (2pt);
\draw (1,2) node[above] {$\sst 1$} (3,2) node[above] {$\sst 2$} (5,2) node[above] {$\sst 3$} (7,2) node[above] {$\sst 4$} (9,2) node[above] {$\sst 5$} (1,0) node[below] {$\sst 5$}
(3,0) node[below] {$\sst 4$} (5,0) node[below] {$\sst 3$} (7,0) node[below] {$\sst 2$} (9,0) node[below] {$\sst 1$};

\end{tikzpicture}

\caption{Decomposition into one cylinder in $\Hyp$.}
\label{fig:1cyl:decomp:model}
\end{figure}
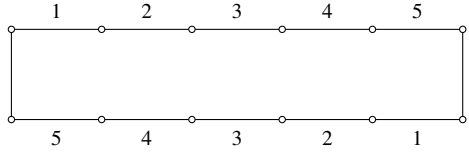
\end{itemize}

\end{Lemma}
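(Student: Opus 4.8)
The plan is to extract all the combinatorial constraints from the hyperelliptic involution $\rho$, and in particular from a count of its fixed points. Since $(M,\omega)\in\H(4)^{\rm hyp}$ is periodic in the horizontal direction, it has exactly five horizontal saddle connections, and their union $\Gamma$ is a graph with a single vertex (the zero, of cone angle $10\pi$) and five edges, whose complement is the disjoint union of the $N$ horizontal cylinders. First I would record the bookkeeping identity $\sum_i b_i=5$, where $b_i$ is the common number of saddle connections on the top and bottom boundary of the $i$-th cylinder (the two numbers agree by Corollary~\ref{cor:cyl:in:hyp}(b)); this holds because each saddle connection lies in exactly two boundary components, once as a top slot and once as a bottom slot.

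The heart of the argument is to count the $2g+2=8$ fixed points of $\rho$ in two ways. By Lemma~\ref{lm:hyp:cyl:inv} every cylinder is $\rho$-invariant, and since $\rho$ has derivative $-\mathrm{Id}$ and interchanges the two boundary components of each cylinder (Corollary~\ref{cor:cyl:in:hyp}(b)), its restriction to a cylinder $\R/w\Z\times(0,h)$ has the form $(x,y)\mapsto(x_0-x,h-y)$, which has exactly two fixed points (on the core curve at height $h/2$). The zero is fixed, and a saddle connection contributes a fixed point, namely its midpoint, precisely when it is $\rho$-invariant, since $\rho$ then reverses it. As every fixed point lies at the vertex, on an invariant edge, or in a cylinder, writing $f$ for the number of $\rho$-invariant saddle connections I obtain
$$1+f+2N=8,\qquad\text{i.e.}\qquad f+2N=7.$$
Because the saddle connections not fixed by $\rho$ come in pairs, $5-f$ is even, so $f$ is odd and $(f,N)\in\{(5,1),(3,2),(1,3)\}$. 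This already proves (i) and pins down the number of invariant saddle connections in each case.

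To produce the models I would pass to the dual multigraph whose vertices are the cylinders and whose edges are the saddle connections, each joining the cylinder directly above it to the one directly below it. A short computation with $\rho$ (using $\rho^*=-\mathrm{Id}$ on the vertical direction together with $\rho$-invariance of cylinders) shows that an invariant saddle connection is a loop, and that the two members of each $\rho$-orbit of size two join the same pair of cylinders. Using connectedness of $M$ and the identity $\sum b_i=5$, the admissible graphs are then easily enumerated: for $N=1$, a single cylinder carrying five loops; for $N=2$, two cylinders joined by the single pair and carrying $\ell_1$ and $\ell_2$ additional loops with $\ell_1+\ell_2=3$, giving the two types $\{b_1,b_2\}=\{1,4\}$ and $\{2,3\}$; and for $N=3$, the two pairs must join $C_1$ to $C_2$ and $C_2$ to $C_3$, with the single loop placed either at an end cylinder (yielding $(b_i)=(2,2,1)$) or at the middle one (yielding $(1,3,1)$), again giving two types.

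Finally I would upgrade each combinatorial type to an actual gluing. The cyclic order of the saddle connections along the top boundary of each cylinder determines the bottom boundary by Corollary~\ref{cor:cyl:in:hyp}(b) (reversed order, with each invariant connection glued to itself by Corollary~\ref{cor:cyl:in:hyp}(a)), and simple cylinders are further constrained by Corollary~\ref{cor:cyl:in:hyp}(c). Checking that the resulting identifications close up to a connected surface with a single cone point of angle $10\pi$ then leaves exactly the models of Figures~\ref{fig:3cyl:decomp:models}, \ref{fig:2cyl:decomp:models}, and \ref{fig:1cyl:decomp:model}. I expect this last step to be the main obstacle: although each case is finite, organizing the enumeration of cyclic orders so as to discard the spurious configurations and confirm uniqueness, while staying consistent with the involution and with the planarity of the link of the zero, is where the real care is required.
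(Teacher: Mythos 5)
Your proposal is correct, and its core counting argument takes a genuinely different (and more self-contained) route than the paper's. The paper proves (i) by citing Masur's bound \cite{Mas86} on the number of parallel cylinders (at most $g+|\ul{k}|-1=3$), and invokes the fixed-point count of the hyperelliptic involution only in the two-cylinder case (iii), where it produces the three invariant saddle connections; the three-cylinder case (ii) is handled instead by an ad hoc partition analysis based on Corollary~\ref{cor:cyl:in:hyp}. You run the fixed-point identity $1+f+2N=8$ globally, which simultaneously proves (i) (since $f$ is odd, $N\leq 3$) with no appeal to \cite{Mas86}, and pins down the number $f$ of invariant saddle connections in every case at once; your dual-multigraph bookkeeping (invariant connections are loops by Corollary~\ref{cor:cyl:in:hyp}(a), each $\rho$-pair gives a doubled edge joining the same two cylinders, and connectedness forces a path) then recovers uniformly the partitions $(1,2,2)$, $(1,1,3)$, $(2,3)$, $(1,4)$ and the single-cylinder case that the paper derives one by one. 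What the paper's route buys is brevity, since Masur's theorem is off the shelf; what yours buys is a unified mechanism and independence from that citation. One caveat: your last step --- upgrading each dual graph to the unique topological model of Figures~\ref{fig:3cyl:decomp:models}, \ref{fig:2cyl:decomp:models}, \ref{fig:1cyl:decomp:model} --- is exactly the point the paper also compresses (``it is now easy to check''), and you flag it as the main obstacle rather than execute it; but the constraints you list (Corollary~\ref{cor:cyl:in:hyp}(a)--(c), reversal of the cyclic order of top-boundary saddle connections under $\rho$, the twist being a continuous rather than combinatorial parameter) are precisely what is needed, and since every boundary component carries at most four saddle connections, the residual enumeration is finite and routine, at the same level of detail the paper itself leaves to the reader.
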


\begin{proof}\hfill
\begin{itemize}
\item[(i)] By result of Masur (see~\cite{Mas86}) we know that a surface in $\Hg$ cannot be decomposed into more than $g+|\ul{k}|-1$ cylinders, where $|\ul{k}|$  is the length of the vector $\ul{k}$. In this case we have $g=3, |\ul{k}|=1$, thus there are at most three cylinders.

\item[(ii)] Observe that each saddle connection is contained in the top boundary of a unique cylinder. Therefore, a cylinder decomposition determines a partition $\mathcal P$ of the set of horizontal saddle connections. We will classify the partitions by the numbers of saddle connections in the subsets. If there are three horizontal cylinders, then we have two kinds of partitions:
\begin{itemize}
\item[$\bullet$] $\mathcal{P}=(1,2,2)$: in this case there is a unique simple cylinder which we will denote by $C_1$. By Corollary~\ref{cor:cyl:in:hyp} (b), $C_1$ is adjacent to a unique cylinder $C_2$, we denote the remaining cylinder by $C_3$. It is now easy to check that there is  only one topological model corresponding to this partition which is given on the left of Figure~\ref{fig:3cyl:decomp:models}.

\item[$\bullet$] $\mathcal{P}=(1,1,3)$: in this case, there are two simple cylinders which will be denoted by $C_1,C_2$. Note that $C_1$ and $C_2$ cannot be adjacent, since otherwise the unique saddle connection in the top boundary of $C_1$ and the bottom one of $C_2$ would be a regular simple closed geodesic. It follows that both $C_1$ and $C_2$ are adjacent to the remaining cylinder $C_3$. Again, there is only one topological model for such a partition, which is shown on the right of Figure~\ref{fig:3cyl:decomp:models}.
\end{itemize}

\item[(iii)] Recall that the hyperelliptic involution of $M$ has exactly 8 fixed points, one of which is the unique singularity of $M$. Since each invariant cylinder contains exactly two fixed points in the interior,  three of the fixed points must be contained in the interiors of the horizontal saddle connections. Note that each saddle connection contains at most one fixed point, in which case it is invariant by the hyperelliptic involution. Thus there are three saddle connections that are invariant by  the hyperelliptic involution. By Corollary~\ref{cor:cyl:in:hyp} (a), each of them is contained in the closure of a unique cylinder.

Since there are two cylinders,  the possible partitions are $\mathcal{P}=(2,3)$ and $\mathcal{P}=(1,4)$.

\begin{itemize}
\item[$\bullet$] $\mathcal{P}=(2,3)$: let $C_1$ denote the cylinder whose top boundary contains two saddle connections, and $C_2$ denote the other one.   Observe that the top boundary of a cylinder cannot contain only invariant saddle connections. Therefore, we must have that the top boundary of $C_1$ contains one invariant saddle connection, and the top boundary of $C_2$ contains two invariant saddle connections. The only possible configuration is shown in Figure~\ref{fig:2cyl:decomp:models}, left.

\item[$\bullet$] $\mathcal{P}=(1,4)$: since the unique saddle connection in the top boundary of the simple cylinder cannot be invariant, the three invariant saddle connections must be contained in the top boundary of the other cylinder. Thus, there is only one possible configuration, which is given in Figure~\ref{fig:2cyl:decomp:models}, right.
\end{itemize}

\item[(iv)] Since there is only one cylinder, all the horizontal saddle connections are invariant by the hyperelliptic involution, and there is only one possible configuration, which is shown in Figure~\ref{fig:1cyl:decomp:model}.
\end{itemize}
\end{proof}

\section{Dimension $4$ submanifolds of $\H(4)^{\rm hyp}$}

Since all the period coordinates of $\H^{\rm hyp}(4)$ are absolute periods, an immediate consequence of Theorem~\ref{th:symp:cond} is

\begin{Corollary}\label{cor:symp:cond:H4}
 If $\smN$ is an affine invariant submanifold of $\H^{\rm hyp}(4)$, then $T_\R(\smN)$  is symplectic. In particular, we have
  $$\dim_\C\smN \in \{2,4,6\}.$$
\end{Corollary}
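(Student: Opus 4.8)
The plan is to derive this directly from the Avila--Eskin--M\"oller symplecticity theorem (Theorem~\ref{th:symp:cond}), using that $\H^{\rm hyp}(4)$ is a minimal stratum, together with the elementary fact that a non-degenerate alternating form can only live on an even-dimensional vector space. First I would record that surfaces in $\H^{\rm hyp}(4)$ have a single zero, so $\Sigma$ consists of one point and the relative cohomology $H^1(M,\Sigma;\R)$ coincides with the absolute cohomology $H^1(M;\R)$; equivalently the projection $p$ is an isomorphism, as already noted in the text. Under this identification $T_\R(\smN)$ is literally a subspace of $H^1(M;\R)$ and equals $p(T_\R(\smN))$.

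Next, Theorem~\ref{th:symp:cond} asserts that the intersection form is non-degenerate on $p(T_\R(\smN))$; via the identification above this is exactly the statement that the cup-product symplectic form on $H^1(M;\R)$ restricts to a non-degenerate form on $T_\R(\smN)$, that is, that $T_\R(\smN)$ is symplectic. Since a vector space admitting a non-degenerate alternating form has even dimension, and $H^1(M;\R)$ has dimension $2g=6$, we conclude that $\dim_\R T_\R(\smN)\in\{0,2,4,6\}$.

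Finally I would convert this into a statement about $\dim_\C\smN$. Because $\smN$ is cut out by complex-linear equations in period coordinates, we have $T(\smN)=\C\otimes_\R T_\R(\smN)$, and hence $\dim_\C\smN=\dim_\C T(\smN)=\dim_\R T_\R(\smN)$. The value $0$ is excluded by $\GL^+(2,\R)$-invariance: the tautological classes $[\Re\omega]$ and $[\Im\omega]$ both lie in $T_\R(\smN)$ and are linearly independent (their cup product computes the nonzero area of $(M,\omega)$), so $\dim_\R T_\R(\smN)\geq 2$. This gives $\dim_\C\smN\in\{2,4,6\}$.

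I do not expect a genuine obstacle here; the content is entirely in quoting Theorem~\ref{th:symp:cond} and unwinding dimensions. The only point that must be handled with care is the passage through absolute cohomology: in a non-minimal stratum $T_\R(\smN)$ can carry relative periods and need not itself be even-dimensional or symplectic, so Theorem~\ref{th:symp:cond} would yield non-degeneracy only after applying $p$. It is precisely the coincidence of relative and absolute periods in the minimal stratum $\Hyp$ that lets us transfer the conclusion back to $T_\R(\smN)$ itself and read off the constraint on $\dim_\C\smN$.
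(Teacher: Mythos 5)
Your proof is correct and follows the same route as the paper, which simply observes that all period coordinates of $\Hyp$ are absolute (so $p$ is an isomorphism) and then quotes Theorem~\ref{th:symp:cond}; your write-up just makes explicit the details the paper leaves implicit, namely the identification $T_\R(\smN)=p(T_\R(\smN))$, the even-dimensionality of a symplectic subspace, and the exclusion of dimension $0$ via the tautological classes $[\Re\omega],[\Im\omega]$.
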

Let $(M,\omega)$ be a surface in  $\Hyp$, and $\smN$ be its $\GL^+(2,\R)$-orbit closure. By Theorem~\ref{th:ob:cl}, $\smN$ is an affine invariant submanifold of $\Hyp$. If $\dim_\C\smN=2$, then $\smN=\GL^+(2,\R)\cdot(M,\omega)$, which means that the $\GL^+(2,\R)$-orbit of $(M,\omega)$ is closed, or equivalently $(M,\omega)$ is a Veech surface. Since $\dim_\C\Hyp=6$, if $\dim_\C\smN=6$, then $\smN=\Hyp$, which means that $\GL^+(2,\R)\cdot(M,\omega)$ is dense in $\Hyp$. Therefore, to prove Theorem~\ref{th:main:Hyp4}, we only need to rule out the case $\dim_\C\smN=4$. We first have

\begin{Proposition}\label{P:3done}
If $\smN$ is an affine invariant submanifold in $\H(4)$ and $\smN$ contains a surface with three free parallel cylinders, then $\smN$ is equal to a connected component of $\H(4)$.
\end{Proposition}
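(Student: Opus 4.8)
The plan is to prove that $\dim_\C\smN=6=\dim_\C\H(4)$, which forces $\smN$ to be open and hence a connected component. Let $(M,\omega)\in\smN$ carry three free parallel cylinders $C_1,C_2,C_3$; applying a suitable element of $\GL^+(2,\R)$ I may assume they are horizontal. Since a surface in $\H(4)$ has at most $g+n-1=3$ horizontal cylinders (Masur's bound, used already in Lemma~\ref{lm:3cyl:models}(i)), these three are all of them and $M$ is horizontally periodic; write $\alpha_1,\alpha_2,\alpha_3$ for their core curves. The engine is the Cylinder Deformation Theorem~\ref{T:CD}: each $C_i$ being free, the horizontal shear of $C_i$ alone remains in $\smN$, so the derivative of this path at $t=0$ lies in $p(T_\R(\smN))$. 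This derivative is a real cohomology class equal to a nonzero multiple of the Poincar\'e dual $\mathrm{PD}(\alpha_i)$ of the core curve. Thus $p(T_\R(\smN))$ contains $\mathrm{PD}(\alpha_1),\mathrm{PD}(\alpha_2),\mathrm{PD}(\alpha_3)$.

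The crux --- and the step I expect to be the main obstacle --- is to show that these three classes span a $3$-dimensional isotropic subspace. Isotropy is immediate: the core curves are disjoint, so $\langle\alpha_i,\alpha_j\rangle=0$, and $\mathrm{PD}$ carries the intersection form on $H_1(M;\R)$ to the cup product on $H^1(M;\R)$. The real content is the linear independence of $[\alpha_1],[\alpha_2],[\alpha_3]$ in $H_1(M;\R)$. Here I would exploit that $\H(4)$ is a minimal stratum: the union of the horizontal saddle connections is a graph with a single vertex, namely the unique zero of $\omega$, hence connected, and the complement $M\setminus(\alpha_1\cup\alpha_2\cup\alpha_3)$ deformation retracts onto this graph and is therefore connected. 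For a family of disjoint simple closed curves, every homological relation is a combination of the boundaries of the complementary regions; with a single complementary region each $\alpha_i$ occurs on both of its sides and cancels, leaving only the trivial relation. Hence the $[\alpha_i]$, and so the $\mathrm{PD}(\alpha_i)$, are independent. (Alternatively, independence can be read off directly from the explicit models of Lemma~\ref{lm:3cyl:models}.)

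Finally I would invoke symplecticity. By Theorem~\ref{th:symp:cond} (see Corollary~\ref{cor:symp:cond:H4}), the intersection form is nondegenerate on $p(T_\R(\smN))$, so $p(T_\R(\smN))$ is a symplectic vector space. It contains the $3$-dimensional isotropic subspace $\mathrm{span}\{\mathrm{PD}(\alpha_1),\mathrm{PD}(\alpha_2),\mathrm{PD}(\alpha_3)\}$, whence $\dim_\R p(T_\R(\smN))\ge 6$; since $p$ is an isomorphism on a minimal stratum, $\dim_\C\smN\ge 6$. (Equivalently, $\vect(\alpha_1,\alpha_2,\alpha_3)$ is isotropic of dimension $3$, so Theorem~\ref{T:manyC} gives $3\le\frac12\dim_\C p(T(\smN))$.) As $\dim_\C\H(4)=6$ this yields $\dim_\C\smN=6$, so the defining immersion of $\smN$ is open; being properly immersed it also has closed image, and being connected it must coincide with a connected component of $\H(4)$.
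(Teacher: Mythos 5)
Your strategy is precisely the paper's: the paper's (very terse) proof also takes the derivatives of the three independent cylinder shears, asserts that they span a three-dimensional Lagrangian in $T_\R(\smN)=p(T_\R(\smN))$, and invokes Theorem~\ref{th:symp:cond} to force $\dim_\C\smN\geq 6=\dim_\C\H(4)$. You correctly identify the one point the paper leaves implicit -- linear independence of the duals of the core curves -- and your isotropy argument and the concluding symplectic dimension count are fine. However, there is a genuine gap in how you establish independence: you claim that, since Masur's bound allows at most three horizontal cylinders and you have three, the surface $(M,\omega)$ \emph{is horizontally periodic}. This does not follow. Masur's bound caps the number of cylinders in a given direction; it says nothing about minimal components, which can coexist with cylinders even in minimal strata (for instance, in $\H(2)$ a torus with minimal horizontal flow, slit horizontally, with a flat cylinder glued across the slit, has exactly one horizontal cylinder and one minimal component). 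So ``three cylinders, the maximum, hence periodic'' is a non-sequitur, and if a minimal component were present, the complement of the three core curves would \emph{not} deformation retract onto the graph of horizontal saddle connections, which is what your independence argument rests on. (It happens that three horizontal cylinders in $\H(4)$ do preclude minimal components, but proving this requires a separate Euler-characteristic and gluing analysis, not just the bound on the number of cylinders.)

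The gap is repairable, because the statement you actually need -- connectivity of $M\setminus(\alpha_1\cup\alpha_2\cup\alpha_3)$ -- holds with no periodicity assumption: every connected component of this complement contains one of the two open halves of some $C_i$, hence contains a boundary component of $C_i$, and therefore the unique zero, in its closure; since the zero lies in exactly one component and components are open, every component meets (and so equals) the component of the zero. Substituting this argument for the deformation-retract step, your proof goes through, and it then establishes the proposition for all of $\H(4)$ as stated. Note also that your parenthetical alternative -- reading independence off the explicit models of Lemma~\ref{lm:3cyl:models} -- would only cover $\Hyp$ (and again presupposes periodicity), whereas the proposition concerns both components of $\H(4)$.
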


\begin{proof}
Since the three cylinders are free, they can be sheared independently. The derivatives of shears in these three cylinders span a three dimensional Lagrangian in $T_\R(\smN)=p(T_\R(\smN))$. By Theorem \ref{th:symp:cond}, $\smN$ must have dimension at least 6, and this is equal to the dimension of $\H(4)$.
\end{proof}

\begin{Proposition}\label{P:3h}
Every dimension $4$ affine invariant submanifold $\smN$ of $\Hyp$ contains a horizontally periodic translation surface with 3 horizontal cylinders.
\end{Proposition}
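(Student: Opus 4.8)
The plan is to reduce the statement to two tools already in place — the dimension/symplectic bound of Corollary~\ref{cor:symp:cond:H4} together with the ``finding cylinders'' machinery of Theorem~\ref{T:manyC} — and then to settle the one residual case by hand using the explicit hyperelliptic models of Lemma~\ref{lm:3cyl:models}.

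First I would record that, since all periods on $\Hyp$ are absolute, $p$ is an isomorphism, so $\dim_\C p(T(\smN))=\dim_\C\smN=4$ and hence $k:=\frac12\dim_\C p(T(\smN))=2$. Applying Theorem~\ref{T:manyC} produces a horizontally periodic $(M,\omega)\in\smN$ whose horizontal core curves $\alpha_1,\dots,\alpha_n$ satisfy $\dim_\C\vect(\alpha_1,\dots,\alpha_n)=k=2$. Since this span has dimension at most $n$, we get $n\geq 2$, while Lemma~\ref{lm:3cyl:models}(i) gives $n\leq 3$. If $n=3$ we are already done, so the entire content of the proposition is the case $n=2$.

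In the case $n=2$, I would observe that $\dim_\C\vect(\alpha_1,\alpha_2)=2$ forces $\alpha_1,\alpha_2$ to be linearly independent in $p(T_{(M,\omega)}\smN)^*$; equivalently the two horizontal cylinders $C_1,C_2$ are not $\smN$-parallel. Because a horizontal cylinder can only be parallel, hence $\smN$-parallel, to another horizontal cylinder, and $C_1,C_2$ are the only horizontal cylinders, each of $C_1,C_2$ is free. By Lemma~\ref{lm:3cyl:models}(iii) the decomposition realizes one of exactly two combinatorial models. The plan is then, in each model, to use the freedom granted by Theorem~\ref{T:CD} to shear and stretch $C_1$ and $C_2$ independently while remaining in $\smN$, choosing the parameters so that a pair of singularities becomes aligned along a new, non-horizontal direction. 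This creates a saddle connection, and I would verify from the explicit picture that the resulting periodic direction decomposes the surface into three cylinders. Applying a rotation $r_\theta\in\GL^+(2,\R)$ to send this direction to the horizontal then yields a surface in $\smN$ that is horizontally periodic with three horizontal cylinders.

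The main obstacle is precisely this last geometric step: checking, model by model, that some admissible shear/stretch of the two free cylinders produces a transverse direction with three cylinders rather than one or two, and that the required alignment is always attainable within the allowed deformations. Here I expect to lean on the hyperelliptic constraints of Corollary~\ref{cor:cyl:in:hyp} — the symmetry between top and bottom boundaries and the adjacency behavior of simple cylinders — to cut down the combinatorial possibilities for the new decomposition and to guarantee that a three-cylinder direction is reachable. A more uniform alternative worth attempting is the contrapositive: if no horizontally periodic surface in $\smN$ had three cylinders, then every horizontally periodic surface would have at most $k=2$ cylinders, so Proposition~\ref{P:allfree} would force \emph{every} cylinder in $\smN$ to be free; one would then derive a contradiction with the hyperelliptic geometry of a two-cylinder model, again through the explicit deformations of Theorem~\ref{T:CD}.
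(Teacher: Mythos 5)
Your reduction to the two-cylinder case is sound (the count $k=2$, the use of Theorem~\ref{T:manyC}, and the observation that in the case $n=2$ both horizontal cylinders are free are all correct), but the step that is supposed to finish the proof in that case has a genuine gap, and it is not a detail --- it is the entire difficulty. First, producing a saddle connection in a new direction by ``aligning singularities'' (note that surfaces in $\H(4)$ have a single zero) does not make that direction periodic: on a surface that is not known to be Veech, a direction containing a saddle connection need not be periodic at all. Second, and more seriously, the deformations you allow yourself in the main route --- shears and stretches of the two free \emph{horizontal} cylinders only --- change just the heights and twists of $C_1,C_2$ and leave the lengths of all horizontal saddle connections fixed. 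One can check in both two-cylinder models of Lemma~\ref{lm:3cyl:models}(iii) that the simple cylinders transverse to the horizontal direction and contained in the big horizontal cylinder have holonomies whose difference is a fixed nonzero sum of horizontal saddle-connection lengths (strictly between $0$ and the circumference), independent of all twists and heights; so no such deformation can ever make two of them parallel, and the natural candidate three-cylinder decompositions require equalities among the fixed lengths that these deformations cannot create. So the main route cannot be completed as written.

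Your fallback (the contrapositive) is exactly the paper's argument, but the sentence ``one would then derive a contradiction with the hyperelliptic geometry of a two-cylinder model'' is precisely the step where all the work lies, and it is left as a black box. The paper's point is that the contrapositive hypothesis, via Proposition~\ref{P:allfree}, makes \emph{every} cylinder free --- in particular the two \emph{transverse} simple cylinders contained in the closure of the big horizontal cylinder of the two-cylinder model. One then shears these two transverse cylinders (not the horizontal ones) simultaneously, by Theorem~\ref{T:CD}, with the two shear amounts chosen so that their vertical contributions cancel and the core curve of the big cylinder stays horizontal; the deformed surface is still horizontally periodic and visibly has three horizontal cylinders (Figure~\ref{fig:twist:2cyl}), contradicting the hypothesis. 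This is the key idea missing from your proposal: the purpose of passing to the contrapositive is exactly to gain freeness of the transverse cylinders, which unlocks deformations that shearing and stretching the two horizontal cylinders alone can never produce.
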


\begin{figure}[htb]
\begin{minipage}[t]{0.8\linewidth}
\centering
\begin{tikzpicture}[scale=0.6]
\fill[yellow!70!blue!30] (-7,0) -- (-5,2) -- (-3,2) -- (-5,0) -- cycle;
\fill[pattern = dots, pattern color=black] (-7,0) -- (-5,2) -- (-3,2) -- (-5,0) -- cycle;
\fill[blue!70] (-3,0) -- (-1,0) -- (-1,2) -- (-3,2) -- cycle;
\draw (-9,4) -- (-9,2) -- (-7,2) -- (-7,0) -- (-1,0) -- (-1,2) -- (-5,2) -- (-5,4) -- cycle;
\draw (-7,2) -- (-5,2);

\foreach \x in {(-9,4), (-9,2), (-7,4), (-7,2), (-7,0), (-5,4), (-5,2), (-5,0), (-3,2), (-3,0), (-1,2), (-1,0)} \filldraw[fill=white] \x circle (2pt);

\draw (-8,4) node[above] {$\sst 1$} (-8,2) node[below] {$\sst 1$} (-6,4) node[above] {$\sst 2$} (-4,0) node[below] {$\sst 2$} (-6,2) node[above] {$\sst 3$} (-4,2) node[above] {$\sst 4$} (-6,0) node[below] {$\sst 4$} (-2,2) node[above] {$\sst 5$} (-2,0) node[below] {$\sst 5$};

\fill[green!30!yellow!70] (4,2) -- (4,1.5) -- (8,1.5) -- (6,2);
\fill[green!30!yellow!70] (8,1.5) -- (10,1.5) -- (10,2) -- cycle;
\fill[green!30!yellow!70] (4,0) -- (6,-0.5) -- (8,-0.5) -- (10,0) -- cycle;
\draw (2,4) -- (2,2) -- (4,2) -- (4,0) -- (6,-0.5) -- (8,-0.5) -- (10,0) -- (10,2) -- (8,1.5) -- (6,2) -- (6,4) -- cycle;
\draw (4,2) -- (6,2) (4,1.5) -- (10,1.5) (4,0) -- (10,0);

\foreach \x in {(2,4), (2,2), (4,4), (4,2), (4,0), (6,4), (6,2), (6,-0.5),(8,1.5), (8,-0.5), (10,2), (10,0)} \filldraw[fill=white] \x circle (2pt);

\draw (3,4) node[above] {$\sst 1$} (3,2) node[below] {$\sst 1$} (5,4) node[above] {$\sst 2$} (7,-0.5) node[below] {$\sst 2$} (5,2) node[above] {$\sst 3$} (7,2) node {$\sst 4$} (5,-0.5) node {$\sst 4$} (9,2) node {$\sst 5$} (9,-0.5) node {$\sst 5$};

\end{tikzpicture}
\end{minipage}
\begin{minipage}[t]{0.8\linewidth}
\centering
\begin{tikzpicture}[scale=0.6]
\fill[yellow!70!blue!30] (-9,0) -- (-7,2) -- (-5,2) -- (-7,0) -- cycle;
\fill[pattern = dots, pattern color=black] (-9,0) -- (-7,2) -- (-5,2) -- (-7,0) -- cycle;
\fill[blue!70] (-5,0) -- (-3,2) -- (-1,2) -- (-3,0) -- cycle;
\draw (-9,4) -- (-9,0) -- (-1,0) -- (-1,2) -- (-7,2) -- (-7,4) -- cycle;
\draw (-9,2) -- (-7,2);

\foreach \x in {(-9,4), (-9,2), (-9,0), (-7,4), (-7,2), (-7,0), (-5,2), (-5,0), (-3,2), (-3,0), (-1,2), (-1,0)} \filldraw[fill=white] \x circle (2pt);
\draw (-8,4) node[above] {$\sst 1$} (-6,0) node[below] {$\sst 1$} (-8,2) node[above] {$\sst 2$} (-6,2) node[above] {$\sst 3$} (-8,0) node[below] {$\sst 3$} (-4,2) node[above] {$\sst 4$} (-2,0) node[below] {$\sst 4$} (-2,2) node[above] {$\sst 5$} (-4,0) node[below] {$\sst 5$};

\fill[green!30!yellow!70] (2,2) -- (4,2) -- (6,1.5) -- (2,1.5) -- cycle;
\fill[green!30!yellow!70] (8,1.5) -- (10,2) -- (10,1.5) -- cycle;
\fill[green!30!yellow!70] (2,0) -- (8,0) -- (6,-0.5) -- (4,-0.5) -- cycle;
\draw (2,4) -- (2,0) -- (4,-0.5) -- (6,-0.5) -- (8,0) -- (10,0) -- (10,2) -- (8,1.5) -- (6,1.5) -- (4,2) -- (4,4) -- cycle;
\draw (2,2) -- (4,2) (2,1.5) -- (6,1.5) (8,1.5) -- (10,1.5) (2,0) -- (8,0);

\foreach \x in {(2,4), (2,2), (2,0), (4,4), (4,2), (4,-0.5), (6,1.5), (6,-0.5), (8,1.5), (8,0), (10,2), (10,0)} \filldraw[fill=white] \x circle (2pt);

\draw (3,4) node[above] {$\sst 1$} (5,-0.5) node[below] {$\sst 1$} (3,2) node[above] {$\sst 2$} (5,2) node {$\sst 3$} (3,-0.5) node {$\sst 3$} (7,1.5) node[above] {$\sst 4$} (9,0) node[below] {$\sst 4$} (9,2) node {$\sst 5$} (7,-0.5) node {$\sst 5$};
\end{tikzpicture}
\end{minipage}
\caption{Shearing two simple cylinders included in the larger horizontal one simultaneously so that the core curve remains horizontal. We get automatically a surface which is decomposed into three horizontal cylinders. On the right, the third horizontal cylinder is colored.}
\label{fig:twist:2cyl}
\end{figure}
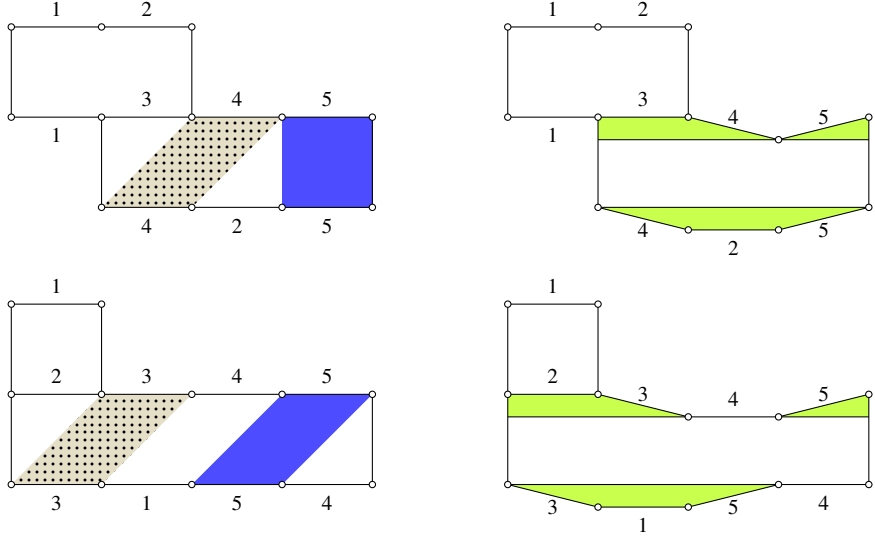

\begin{proof}
Assume in order to find a contradiction that every translation surface in $\smN$ has at most two horizontal cylinders.

Then by Proposition~\ref{P:allfree}, every cylinder on every translation surface $M\in \smN$ is free, which means that we are free to shear and stretch any cylinder while staying in $\smN$. By Proposition \ref{T:manyC}, we can find a surface $M$ which is horizontally periodic with two horizontal cylinders. There are only two  models for such decompositions in $\Hyp$ (see Lemma~\ref{lm:3cyl:models}). Both have a horizontal cylinder $C$ with two transverse simple cylinders contained in the closure of $C$. These can be sheared non-trivially so that the core curve of $C$ remains horizontal. This produces a translation surface $M'$ where the two horizontal cylinders from $M$ persist and remain horizontal. It is not difficult to see that  the remaining part of the surface is also a horizontal cylinder (see Figure~\ref{fig:twist:2cyl}).
\end{proof}

We are now ready to prove the following theorem, which implies immediately Theorem~\ref{th:main:Hyp4}.

\begin{Theorem}\label{T:no:dim4}
There are no $4$ dimensional affine invariant submanifolds of $\Hyp$.
\end{Theorem}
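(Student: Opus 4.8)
The plan is to argue by contradiction, since by Corollary~\ref{cor:symp:cond:H4} dimension $4$ is the only case left to exclude. Suppose $\smN \subset \Hyp$ is affine invariant with $\dim_\C \smN = 4$. Because all periods in $\Hyp$ are absolute and the projection $p$ is an isomorphism, $p(T(\smN)) = T(\smN)$ is a symplectic space of complex dimension $4$, so the integer $k = \frac12 \dim_\C p(T(\smN))$ appearing in Theorem~\ref{T:manyC} equals $2$. By Proposition~\ref{P:3h}, $\smN$ contains a horizontally periodic surface $(M,\omega)$ with exactly three horizontal cylinders $C_1, C_2, C_3$, glued according to Model I or Model II of Figure~\ref{fig:3cyl:decomp:models} by Lemma~\ref{lm:3cyl:models}.

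The first step is to show that two of the three horizontal cylinders are forced to be $\smN$-parallel. Suppose not, so that $C_1,C_2,C_3$ are pairwise non-$\smN$-parallel. Since $\smN$-parallel cylinders are in particular parallel, none of the $C_i$ can be $\smN$-parallel to a non-horizontal cylinder; hence each $C_i$ is free. Then $(M,\omega)$ carries three free parallel cylinders, and Proposition~\ref{P:3done} forces $\smN$ to be a connected component of $\H(4)$, i.e. $\dim_\C \smN = 6$, contradicting our assumption. Thus, after relabeling, $C_1$ and $C_2$ are $\smN$-parallel; let $\cC$ denote the $\smN$-parallel class containing both of them.

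The heart of the proof is then a case analysis, over the two models of Lemma~\ref{lm:3cyl:models} and over which pair is the $\smN$-parallel one, showing that the relation $C_1 \sim_{\smN} C_2$ is incompatible with the flat geometry. The main tool is Proposition~\ref{P:break}: for any equivalence class $\cC'$ of $\smN$-parallel cylinders in a transverse direction, $P(C_1, \cC') = P(C_2, \cC')$, so $\cC'$ must cover the same proportion of $C_1$ as of $C_2$. To exploit this I would first normalize the configuration with the $\GL^+(2,\R)$-action and the Cylinder Deformation Theorem~\ref{T:CD} (stretching the class $\cC$, and the free cylinder $C_3$ when it exists), pass to a transverse periodic direction via Theorem~\ref{T:SW}, and then read off from the explicit gluings in Figures~\ref{fig:3cyl:decomp:models}--\ref{fig:1cyl:decomp:model}, together with the hyperelliptic constraints of Corollary~\ref{cor:cyl:in:hyp}, the proportions in which a transverse class meets $C_1$ and $C_2$. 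Because the two cylinders of the $\smN$-parallel pair sit asymmetrically within each admissible model, one should be able either to exhibit a transverse class meeting them in unequal proportions, contradicting Proposition~\ref{P:break}, or to deform to a new surface on which the three horizontal cylinders all become free, again contradicting Proposition~\ref{P:3done}; freeness of the auxiliary cylinders arising along the way would come from Proposition~\ref{P:morefree} applied to the containment and adjacency data of the models.

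The main obstacle is precisely this last step. One does not control a priori the transverse cylinder decomposition, so the argument must combine a deformation to a convenient periodic direction with the rigidity coming from both the hyperelliptic involution (Corollary~\ref{cor:cyl:in:hyp}) and the very short, explicit list of admissible gluings (Lemma~\ref{lm:3cyl:models}). The real work lies in organizing the cases so that in \emph{every} configuration one can guarantee a transverse $\smN$-parallel class that witnesses unequal proportions (or that forces three free parallel cylinders), thereby always colliding with the identity $P(C_1,\cC') = P(C_2,\cC')$.
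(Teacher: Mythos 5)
Your setup follows the paper's strategy, but there are two genuine gaps, one of which is the heart of the matter. First, a structural gap: you only establish that \emph{at least} two of the three horizontal cylinders are $\smN$-parallel; you never establish that the remaining cylinder is free, nor do you rule out all three lying in a single $\smN$-parallel class. The paper pins down the precise structure (exactly one $\smN$-parallel pair plus one \emph{free} cylinder) via Theorem~\ref{T:manyC}: the span of the core curves $\alpha_1,\alpha_2,\alpha_3$ in $p(T(\smN))^*$ must have dimension exactly $2$, since if it were $<2$ the strict-inequality clause of that theorem would produce a horizontally periodic surface in $\smN$ with more than three horizontal cylinders, which is impossible in $\Hyp$. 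Hence there is a \emph{unique} relation $r_1\alpha_1+r_2\alpha_2+r_3\alpha_3=0$, and the analysis of its coefficients yields one $\smN$-parallel pair and one free cylinder. This is not a technicality: in every subsequent case the paper's argument \emph{begins} by shearing the free cylinder to create a transverse cylinder, so without freeness of the third cylinder the case analysis cannot even start.

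Second, and decisively: the case analysis itself --- which is essentially the entire content of the paper's proof --- is not carried out in your proposal. You correctly name the tools (Proposition~\ref{P:break}, Proposition~\ref{P:morefree}, Theorem~\ref{T:CD}, the models of Lemma~\ref{lm:3cyl:models}) and the intended contradiction, but you leave the construction of the witnessing transverse cylinders as something that ``one should be able'' to do, and you yourself flag this as the main obstacle. In the paper this is resolved not by passing to an arbitrary transverse periodic direction via Theorem~\ref{T:SW} (which, as you observe, gives no control over the transverse decomposition), but by \emph{explicitly constructing} transverse cylinders by hand: in each of the five colored configurations (grouped into Cases 1, 2-A, 2-B, 3), one shears the free cylinder so that a concrete transverse cylinder $D$ (and in Case 3 a second one $E$) appears, reads its intersections with the horizontal cylinders off the explicit polygonal model, and then propagates freeness through Proposition~\ref{P:morefree}/\ref{P:break} until one member of the supposedly $\smN$-parallel pair is shown to be free --- a contradiction. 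Without executing this construction in each configuration, the proposal is a plan rather than a proof.
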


\begin{proof}
Suppose to the contrary that a 4 dimensional affine invariant submanifold $\smN$ of  $\Hyp$ does exist. By Proposition~\ref{P:3h}, it has a surface $(M,\omega)$ which is horizontally periodic with three horizontal cylinders. Let $\alpha_1,\alpha_2,\alpha_3$ denote the core curves of the cylinders. By Theorem~\ref{T:manyC}, we know that $\dim_\C\vect(\alpha_1,\dots,\alpha_3)\leq 2$. If $\dim_\C\vect(\alpha_1,\dots,\alpha_3) < 2$, then there exists a horizontally periodic surface in $\smN$ which has more horizontal cylinders than $M$. But a surface in $\Hyp$ cannot have more than three cylinders, therefore we can conclude that $\dim_\C \vect(\alpha_1,\dots,\alpha_3)=2$.

Thus there exist $r_1,r_2,r_3 \in \R$, not all zero, such that
$$ r_1\alpha_1 + r_2\alpha_2 + r_3\alpha_3 = 0 \text{ in } T(\smN)^*,$$
and this is the only relation between $\alpha_1, \alpha_2, \alpha_3$ in $T(\smN)^*$.

If $r_1r_2r_3 \neq 0$, then no cylinder is $\smN$-parallel to another one. Thus all the cylinders are free, and so Proposition~\ref{P:3done}  implies that $\dim_\C \smN=6$. Since this is a contradiction, we must have $r_1r_2r_3=0$. Since each $\alpha_i\neq 0$ in  $T(\smN)^*$, only one of $\{r_1,r_2,r_3\}$ is zero. The other two are nonzero, which means that two of the cylinders are $\smN$-parallel, and the third one is free.

We only have two topological models for 3-cylinder decompositions in $\Hyp$ (see Lemma~\ref{lm:3cyl:models}). The possible situations are shown in Figure~\ref{fig:3cyl:all:cases}, where the pair of $\smN$-parallel cylinders have been colored. The left and center situations on the top row are actually the same, so there are 5 different situations. We will regroup these situations into three cases. In what follows, we will use  cylinder deformation arguments to get a contradiction in all cases, which shows as desired that we cannot have $\dim_\C\smN=4$.
\begin{figure}[h]
\includegraphics[scale=0.4]{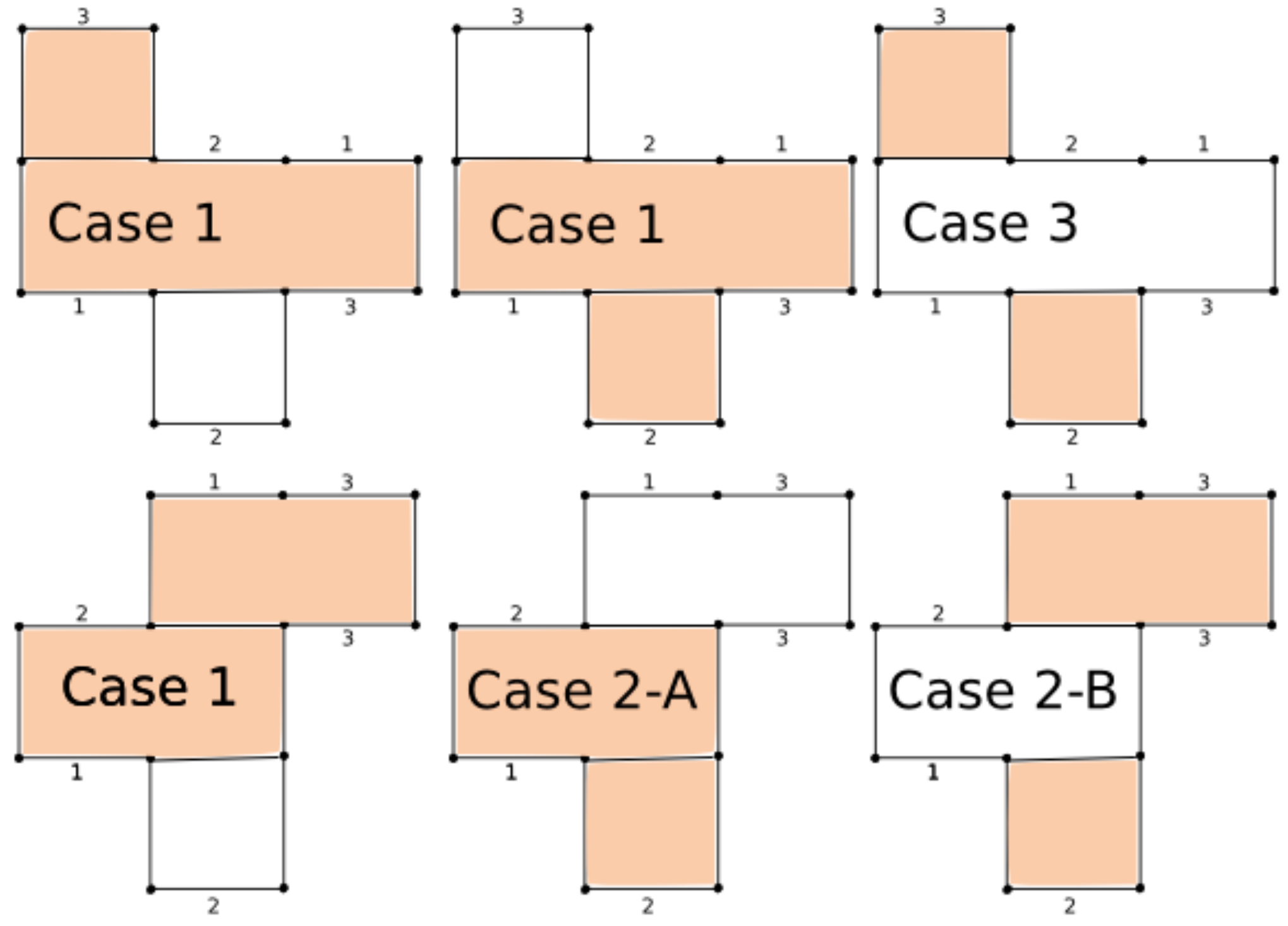}
\caption{To prove the theorem, it suffices to study three cylinder directions, and prove that no horizontal cylinder can be $\smN$-parallel to any other. All of the possible cases for three cylinder directions are listed here, with all possibilities for a pair of $\smN$-parallel horizontal cylinders (colored). The possibilities are grouped into cases specified below.
}
\label{fig:3cyl:all:cases}
\end{figure}

\bold{Case 1.} $M$ has a simple horizontal cylinder $C_1$ that is free. By Corollary~\ref{cor:cyl:in:hyp} (c), $C_1$ is adjacent to a unique horizontal cylinder $C_2$. Let $C_3$ denote the third and final horizontal cylinder. By assumption $C_2$ and $C_3$ are $\smN$-parallel. Shear $C_1$ so that there is a transverse cylinder $D$  whose closure contains $C_1$ and goes through $C_2$ but not $C_3$ (see Figure~\ref{fig:C1:free}).
\begin{figure}[h]
\begin{tikzpicture}[scale=0.5]
\fill[gray!40] (0,6) --(0,4) -- (4,2) -- (6,2) -- (2,4) -- (2,6) -- cycle;

\draw (0,6) -- (0,2) -- (2,2) -- (2,0) -- (4,0) -- (4,2) -- (6,2) -- ( 6,4) -- (2,4) -- (2,6) -- cycle;

\draw (0,4) -- (4,2) (0,5) -- (6,2) (0,6) -- (2,5);

\foreach \x in {(0,6), (2,6), (0,4), (0,2), (2,4), (2,2), (2,0), (4,4), (4,2), (4,0), (6,4), (6,2)} \filldraw[fill=black] \x circle (2pt);

\draw (1,6) node[above] {\tiny $3$} (1,2) node[below] {\tiny $1$} (3,4) node[above] {\tiny $2$} (3,0) node[below] {\tiny $2$} (5,4) node[above] {\tiny $1$} (5,2) node[below] {\tiny $3$};

\draw (3,3) node {\tiny $D$};

\end{tikzpicture}
\caption{An example of Case 1: $C_1$ is  the top horizontal cylinder which is free, the middle horizontal cylinder is $C_2$. After shearing $C_1$, one can find a transverse cylinder $D$ corresponding to the shaded region.}
\label{fig:C1:free}
\end{figure}
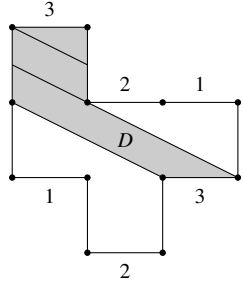

By part (a) of Proposition \ref{P:morefree} (using $C_1$ as the free cylinder), we see that $D$ is free. By part (c) of the same proposition (using $D$ as the free cylinder), we see that $C_2$ is free, which is a contradiction to the hypothesis that $C_2$ and $C_3$ are $\smN$-parallel.

\medskip

\bold{Case 2 (A and B).} $M$ has a simple horizontal cylinder $C_1$ that is $\smN$-parallel to a non-simple horizontal cylinder $C_2$. Let the third horizontal cylinder be $C_3$; this must be free.

\bold{Case 2-A.} $C_1$ is adjacent to $C_2$, and $C_3$ is also adjacent to $C_2$. Note that both $C_2$ and $C_3$ are not simple by assumption, so there is a unique pair of saddle connections that border both $C_2$ and $C_3$.  Since $C_3$ is free, we can shear it to get another surface $(M',\omega')$ in $\smN$ with a transverse cylinder $D$  whose closure contains entirely the horizontal saddle connections between $C_2$ and $C_3$, and intersects only $C_2$ and $C_3$ (see Figure~\ref{fig:C1:no:free:A}).
\begin{figure}[htb]
\centering
\begin{tikzpicture}[scale=0.5]
\fill[yellow] (0,2) -- (2,2) -- (6,6) -- (4,6) -- cycle;
\fill[green!50] (4,4) -- (6,4) -- (8,6) -- (6,6) -- cycle;
\draw (0,4) -- (0,2) -- (2,2) -- (2,0) -- (4,0) -- (4,4) -- (6,4) -- (8,6) -- (4,6) -- (2,4) -- cycle;
\draw (0,2) -- (2,4) (2,2) -- (6,6) (2,2) -- (4,2);

\foreach \x in {(0,4), (0,2), (2,4), (2,2), (2,0), (4,6), (4,4), (4,2) ,(4,0), (6,6), (6,4), (8,6)} \filldraw[fill=white] \x circle (2pt);
\draw (1,2) node[below] {$\sst 1$} (5,6) node[above] {$\sst  1$} (1,4) node[above] {$\sst 2$} (3,0) node[below] {$\sst 2$}  (5,4) node[below] {$\sst 3$} (7,6) node[above] {$\sst 3$};
\draw (3,4) node {$\sst D$} (6,5) node {$\sst D'$} (3,1) node {$\sst C_1$};
\end{tikzpicture}

\caption{Case 2-A: $C_1$ is the bottom horizontal cylinder,  and the free cylinder $C_3$ is the top horizontal cylinder. After shearing $C_3$, we get a transverse cylinder $D$ intersecting $C_2$ and $C_3$, and a parallel cylinder $D'$ contained in the closure of} $C_3$.
\label{fig:C1:no:free:A}
\end{figure}
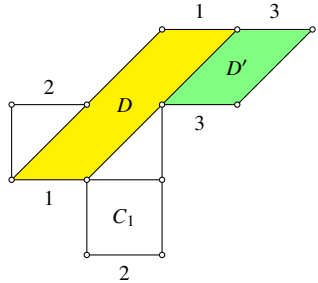

Observe that the complement of $D$ in $C_3$ is another cylinder $D'$ in the same direction of $D$. By  Proposition~\ref{P:morefree} (c) (with free cylinder $C_3$), we see that $D$ is free.

Now Proposition~\ref{P:break} implies that $C_1$ and $C_2$ are not $\smN$-parallel since they do not have an equal portion in $D$, and we get a contradiction.

\medskip
\bold{Case 2-B.} The simple cylinder $C_1$ is adjacent to the free cylinder $C_3$. Note that $C_2$ is also adjacent to $C_3$. The topological model for this case is the same as Case 2-A, we only change the coloring and the numbering of the cylinders. In Figure~\ref{fig:C1:no:free:A}, $C_3$ is the middle cylinder, and $C_2$ is the top one.

By a similar  argument to  the one in Case 2-A, we see that by shearing $C_3$, we can find a surface $(M',\omega')$ in $\smN$ with two parallel transverse cylinders $D$ and $D'$, where $D$ intersects $C_2$ and $C_3$, and $D'$ is contained entirely in $C_2$.

 As in Proposition~\ref{P:morefree} (b), we see that $D'$ is free. (More precisely, we use Proposition~\ref{P:break} after observing that there is no other cylinder parallel to $D'$ which is contained entirely in the equivalence class $\{C_1, C_2\}$.)  By Proposition~\ref{P:morefree} (a) (with free cylinder $D'$), we see that $C_2$ is free, which contradicts to the assumption that $C_1$ and $C_2$ are $\smN$-parallel.

\medskip
\bold{Case 3.} The only remaining case in $\Hyp$ is that there are two simple horizontal cylinders $C_1$ and $C_2$ on $M$ which are $\smN$-parallel, and a third horizontal cylinder $C_3$ which is free but not simple.

\begin{figure}[htb]
\centering
\begin{tikzpicture}[scale=0.7]
\fill[yellow] (-8,4) -- (-6,2) -- (-4,2) -- (-6,4) -- cycle;

\draw (-1,0) -- ((-2,2) -- (-2,4) -- (-4,4) -- (-4,6) -- (-6,6) -- (-6,4) -- (-8,4) -- (-8,2) -- (-4,2) -- (-3,0) -- cycle (-6,2) -- (-8,4) (-4,2) -- (-6,4);
\draw (-6,4) -- (-4,4) (-4,2) -- (-2,2);
\draw[thin, dashed]  (-4,4) -- (-4,2);
\foreach \x in {(-8,4), (-8,2), (-6,6),(-6,4), (-6,2), (-4,6), (-4,4), (-4,2),(-3,0), (-2,4), (-2,2), (-1,0)} \filldraw[fill=white] \x circle (2pt);

\draw (-7,2) node[below] {$\sst 1$} (-5,6) node[above] {$\sst 1$}  (-7,4) node[above] {$\sst 3$} (-5,2) node[below] {$\sst 3$} (-3,4) node[above] {$\sst 2$} (-2,0) node[below] {$\sst 2$};
\draw  (-3,3) node {$\sst C_3$} (-6,3) node {$\sst D$};

\fill[yellow]  (2,4) -- (4,2) -- (4,0) -- (2,2) -- cycle;
\draw (2,0) -- (4,0) -- ( 4,2) -- (5,0) -- (7,0) -- (6,2) -- (6,4) -- (4,4) -- (4,6) -- (2,6) -- cycle (4,0) -- (2,2) (4,2) -- (2,4);
\draw (4,4) -- (4,2);
\foreach \x in {(2,6), (2,4), (2,2),(2,0), (4,6), (4,4), (4,2), (4,0), (5,0), (6,4),(6,2), (7,0)} \filldraw[fill=white] \x circle (2pt);
\draw (3,6) node[above] {$\sst 1$} (3,0) node[below] {$\sst 1$} (5,4) node[above] {$\sst 2$} (6,0) node[below] {$\sst 2$};
\draw (3,5) node {$\sst E$} (3,2) node {$\sst D$};

\end{tikzpicture}
\caption{On the left is the only possible picture in Case 3: the middle cylinder has been sheared so the dotted line is vertical. The horizontal cylinders are $C_1, C_3, C_2$ from top to bottom. Once the lengths of $1$ and $3$ are the same, the left picture can be cut and pasted to get the right picture. Then there is a vertical cylinder $E$, which occupies the left side of the right picture. The cylinder $D$ is in yellow.}
\label{fig:2:sim:cyl:related}
\end{figure}
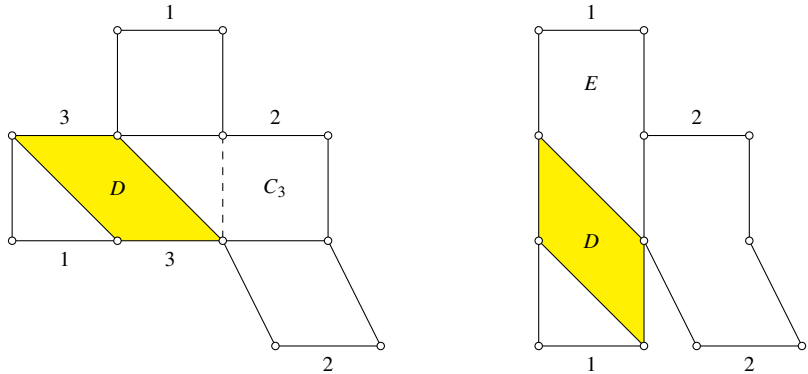

For the arguments in this case, we refer to Figure~\ref{fig:2:sim:cyl:related}. Observe that there is a cylinder $D$ that is contained in the closure of $C_3$, as shown on the left of Figure~\ref{fig:2:sim:cyl:related}. Proposition~\ref{P:morefree} (b) (with free cylinder $C_3$) gives that $D$ is free. Deforming $D$ (stretching it horizontally) we see that the lengths of $1$ and $3$ can be made to be equal. By shearing $C_3$, we get a surface $(M',\omega')$ with a cylinder $E$   whose closure contains both $1$ and $3$ entirely. Note that $C_1$ is included in $E$, and we can suppose that $E$ is vertical.

It follows that $E$ is free by Proposition~\ref{P:morefree} (a) (with free cylinder $D$). Then it follows that $C_1$ is free by  Proposition~\ref{P:morefree} (b) (with free cylinder $E$).
Thus we have a contradiction to the assumption that $C_1$ and $C_2$ are $\smN$-parallel. The proof of Theorem~\ref{T:no:dim4} is now complete.
\end{proof}

\end{document}